\newtheorem{theorem}{Theorem}[section]
\newtheorem{definition}[theorem]{Definition}
\newtheorem{example}[theorem]{Example}
\newtheorem{lemma}[theorem]{Lemma}
\newtheorem{proposition}[theorem]{Proposition}
\newtheorem{remark}[theorem]{Remark}
\newenvironment{proof}[1][Proof]{\noindent\textbf{#1.} }{\ \rule{0.5em}{0.5em}}
\newcommand{\prob}{{\mathbb P}}
\newcommand{\dN}{{\mathbb N}}
\newcommand{\dR}{{\mathbb R}}
\newcommand{\dP}{{\rm \bf P}}
\newcommand{\dE}{{\rm \bf E}}
\newcommand{\ep}{\varepsilon}
\newcommand{\la}{\lambda}
\newcommand{\Eqref}[1]{Eq.~(\ref{#1})}
\newcommand{\black}{\color{black}}
\newcounter{figurecounter}
\begin{document}

\title{Absorbing Blackwell Games%
\thanks{
This work has been partly supported by COST Action CA16228
European Network for Game Theory. 
Ashkenazi-Golan 
acknowledges the support of the Israel Science Foundation, grants
\#217/17 and \#722/18, and the NSFC-ISF Grant \#2510/17. 
Solan acknowledges the support of the Israel Science Foundation, grant
\#217/17.}}
\author{Galit Ashkenazi-Golan%
\footnote{London School of Economics and Political Science,
Houghton Street,
London,
WC2A 2AE, UK,
E-mail: galit.ashkenazi@gmail.com.}\and
J\'{a}nos Flesch\footnote{Department of Quantitative Economics,
Maastricht University, P.O.Box 616, 6200 MD, The Netherlands. E-mail: j.flesch@maastrichtuniversity.nl.}
\and
Eilon Solan\footnote{School of Mathematical Sciences, Tel-Aviv University, Tel-Aviv, Israel, 6997800, E-mail: eilons@tauex.tau.ac.il.}}
\maketitle

\begin{abstract}
It was shown in Flesch and Solan (2022) with a rather involved proof that all two-player stochastic games with finite state and action spaces and shift-invariant payoffs admit an $\ep$-equilibrium, for every $\ep>0$. 
Their proof also holds for two-player absorbing games with tail-measurable payoffs.
In this paper we provide a simpler proof for the existence of $\ep$-equilibrium in two-player absorbing games with tail-measurable payoffs,
by combining recent mathematical tools for such payoff functions with classical tools for absorbing games.
\end{abstract}

\section{Introduction}
Flesch and Solan (2022) studied two-player stochastic games with finite state and action spaces, and showed that these games admit an $\ep$-equilibrium, for every $\ep>0$, provided that the payoff functions are bounded, Borel-measurable, and shift-invariant. 
The proof is involved and heavily relies on the results of Vieille (2000a, 2000b). The last condition of shift-invariance requires that the payoffs do not depend on the initial segment of the infinite play; the payoff remains the same if we replace an initial segment of the infinite play by another segment of any finite length. This condition is satisfied by several payoff functions, 
such as the long-term average of stage-payoffs (see, e.g., Sorin (1992) and Vieille (2000a, 2000b)), 
the limsup or liminf of stage-payoffs (see, e.g., Maitra and Sudderth (1993)), and 
various winning conditions studied in computer science, including the B\"{u}chi, parity, and M\"{u}ller winning conditions (see, e.g., Gr\"{a}del and Ummels (2008) and Chatterjee and Henzinger (2012)).

A focal class of stochastic games are absorbing games, which played a fundamental role in the development of stochastic games (see, e.g., Blackwell and Ferguson (1968), Vrieze and Thuijsman (1989), Solan (1999),
Solan and Vieille (2001), Solan and Vohra (2001, 2002), and Solan and Solan (2020, 2021)). These are stochastic games in which there is only one nonabsorbing state, and in the absorbing states the players have only one action. That is, the state of the game can change at most once, and once absorption occurs, the play is strategically over. 
In the papers mentioned above, 
various techniques have been developed to study 
different equilibrium concepts in
absorbing games,
and some of those have been extended to stochastic games with more than one nonabsorbing state
or to stopping games, see Shmaya and Solan (2004), Heller (2012), and Solan (2018).

In this paper we study absorbing games with bounded and Borel-measurable payoffs. To distinguish these from classical absorbing games, in which the payoff is the discounted sum of stage-payoffs or the long-term average of stage-payoffs, we call these games \emph{absorbing Blackwell games}, referring to Blackwell (1969), who proposed to look at simultaneous-move games with general evaluations.

The result of Flesch and Solan (2022) implies 
in particular
that two-player absorbing Blackwell games with finitely many actions and bounded, Borel-measurable, and shift-invariant payoffs admit an $\ep$-equilibrium, for every $\ep>0$. 
Following closely their proof reveals that in fact it extends to two-player absorbing Blackwell games with finitely many actions and bounded, Borel-measurable, and tail-measurable payoffs.
Tail-measurability requires that
the payoff remains the same if we replace an initial segment of the infinite play by another segment of the same length.
One example of a tail-measurable payoff function that is not shift-invariant is the function that is equal to the limsup of the average payoffs in stages that are even numbers.

Since the proof of Flesch and Solan (2022) applies to stochastic games with any finite number of nonabsorbing states,
its proof is involved.
In this paper we concentrate on two-player absorbing games with finite action sets and bounded, Borel-measurable, and tail-measurable payoffs, and provide a simpler proof for the existence of an $\ep$-equilibrium, for every $\ep > 0$.

The new proof has several advantages.
Since our proof is much simpler than the one provided in Flesch and Solan (2022),
it is more accessible to the community.
In addition, our proof does not rely on Vieille (2000a, 2000b), which apply to all two-player stochastic games, but rather on Vrieze and Thuijsman (1989), which is tailored specifically to two-player absorbing games,
and hence it connects the proof to classical tools in the study of equilibria in absorbing games.
As a consequence,
our proof method may pave the way for extentions of our result
 to more than two players or to more than one nonabsorbing state, by applying techniques developed for absorbing games with more than two players 
(e.g., Solan (1999), Solan and Vohra (2002), or
Solan and Solan (2021)) or more than one nonabsorbing state
(see Solan (2000)).

\medskip
\noindent\textbf{Related literature.} 
Blackwell (1969) presented a new class of two-player repeated games. In these games, at each stage the players choose simultaneously actions in their action sets, player~1 wins if the generated play is in some given set of plays, and player~2 wins otherwise.
Blackwell (1969) proved that if the action sets are finite and the winning set is $G_\delta$, that is,
a countable intersection of open sets, then the game has a value.
The restriction to finite action sets is natural, since even in one-shot games
with countable action sets, the value need not exist.
Orkin (1972) extended the result to include winning sets
in the Boolean algebra generated by $G_\delta$ sets,
and Vervoort (1996) extended it to winning sets that are $G_{\delta\sigma}$, that is, countable unions of $G_\delta$ sets.
Martin (1998) proved that the value exists as soon as the action sets are finite and the winning set is Borel-measurable.
In fact, Martin's (1998) result does not assume that the outcome is determined by a winning set;
rather, as soon as the outcome is a bounded Borel-measurable function of the whole play, the value exists.

A natural question that arises is whether every multiplayer nonzero-sum Blackwell game admits an $\ep$-equilibrium,
for every $\ep > 0$.
If the payoff functions of the players are tail-measurable,
then the answer is positive, see Ashkenazi-Golan, Flesch, Predtetchinski, Solan~(2022).
Blackwell games can be seen as stochastic games with a countable state space and deterministic transitions, by identifying each history of the Blackwell game with a state.
The existence of an $\ep$-equilibrium in this class of games is not known in general.
With regards to the discounted payoff,
Fink (1964) and Takahashi (1964) proved the existence of a discounted equilibrium when the set of states is countable and the action sets are compact,
under proper continuity conditions on the payoff functions and the transition function.
Vieille (2000a, 2000b) proved that every two-player stochastic game with finite state and action sets
admits an $\ep$-equilibrium for the average payoff, for every $\ep > 0$.
Solan (1999) proved an analogous result in three-player absorbing games with finite action sets.
Additional results in this direction can be found in, e.g.,
Solan and Vieille (2001), Simon (2007, 2012, 2016), Flesch et al.~(2008, 2009), Solan and Solan (2020), and Solan et al.~(2020);
see also the survey by Ja\'{s}kiewicz and Nowak (2016). For existence results for $\ep$-equilibrium with payoff functions other than the discounted or average payoff, see for example Maitra and Sudderth (1998, 2003), Chatterjee (2005, 2006), Le Roux and Pauly (2014), Flesch, Herings, Maes, Predtetchinski (2022).

To our knowledge, this is the first paper that combines Blackwell games and absorbing games, both in model and tools.

%\color{blue}
%Maybe we should begin the introduction with the sentence above. Something like: "In this paper, we combine Blackwell games and absorbing games. This is done both in model and the mathematical tools used...." Janos: Yes, that is a possibility, but it would require some changes in the introduction, as Blackwell games are not defined yet. The current version has some advantage too: I like this sentence at this place, because it re-iterates the significance of our paper compared to previous literature. We already added a short sentence in the abstract in this direction. Maybe it is better to leave it as it is.\color{black}
\medskip

\noindent\textbf{Structure of the paper.} The paper is organized as follows. In Section \ref{Sec.Model} we describe the model of absorbing Blackwell games. In Section \ref{sec-mainres} we present the main theorem. In Section \ref{sec-aux} we discuss preliminaries for the proof. In Section \ref{sec-proof} we prove the main theorem. 
In Section \ref{Sect.Discuss} we 
raise some open problems.
%make some concluding remarks.

\section{Model}\label{Sec.Model}

In this section we present the model of absorbing Blackwell games. Then, we describe two classical classes of absorbing Blackwell games. Finally, we define tail-measurability, which is a crucial condition for our main theorem. 

\subsection{Absorbing Blackwell games}\label{sect-AbsBl}

Throughout the paper, the set of natural numbers is denoted by $\dN=\{1,2,\ldots\}$.

\begin{definition}\label{def-abs-Blackwell}
An \emph{absorbing Blackwell game} is a tuple \[\Gamma = (I,(A_i)_{i \in I}, (f_i, g_i)_{i \in I}, p),\]
where
\begin{enumerate}
    \item $I$ is a nonempty and finite set of players.
    \item For each $i\in I$, $A_i$ is a nonempty and finite action set for player $i$.\vspace{0.1cm}\\ Let $A=\times_{i\in I} A_i$ denote the set of action profiles; 
    let $A^{<\dN}$ denote the set of histories, i.e., finite sequences of elements of $A$, including the empty sequence; and let $A^\dN$ denote the set of runs, i.e., infinite sequences of elements of $A$. We endow $A$ with the discrete topology and $A^\dN$ with the product topology.
    \item For each $i \in I$, $f_i : A^{<\dN} \to \dR$ is a bounded function, called the  absorbing payoff function for player~$i$. 
    \item For each $i \in I$, $g_i : A^{\dN} \to \dR$ is a bounded and Borel-measurable function, called the nonabsorbing payoff function for player $i$.
  \item $p : A \to [0,1]$ is a function assigning the probability of absorption $p(a)$ to each action profile $a\in A$.
\end{enumerate}
\end{definition}

We assume w.l.o.g.~that $f_i$ and $g_i$, for each player $i\in I$, take values in $[0,1]$. 
%We also assume that the absorption probabilities are not all zeros: $p(a)>0$ for at least one action profile $a\in A$. The special case when $p(a)=0$ for all $a\in A$ is discussed in Ashkenazi-Golan, Flesch, Predtetchinski, and Solan~(2022).

The game is played 
in discrete time
as follows. At every stage $t\in\dN$, each player $i\in I$ chooses an action $a_{ti} \in A_i$,
simultaneously with the other players. This yields an action profile $a_t=(a_{ti})_{i\in I}$. With probability $p(a_t)$ the play terminates
(in equivalent terminology, the play is absorbed) and the terminal payoff is $f_i(a_1,\ldots,a_t)$ to each player $i\in I$.
With probability $1-p(a_t)$ the play continues to the next stage $t+1$.
If the play is never absorbed, the payoff is $g_i(a_1,a_2,\ldots)$ to each player $i\in I$.

\medskip

\noindent\textbf{Histories.} A \emph{history} at stage $t$ is $(a_1,a_2,\ldots,a_{t-1})$, and it represents a game-situation in which absorption has not taken place.\footnote{Thus, we assume that the play of the game continues to stage $t$ under $(a_1,a_2,\ldots,a_{t-1})$, even if absorption has probability~1 under this history. This is merely done to simplify the exposition.} A generic history, i.e., a generic element of $A^{<\dN}$, is denoted by $h$.

For two histories $h,h'\in A^{<\dN}$, we write $hh'$ for the concatenation of $h$ with $h'$. Similarly, for a history $h\in A^{<\dN}$ and a run $r\in A^\dN$, we write $hr$ for the concatenation of $h$ with $r$.

Each history induces a subgame of $\Gamma$. Given a history $h\in A^{t-1}$ at stage $t$, the subgame that starts at $h$ is the game $\Gamma^h=(I,(A_i)_{i \in I}, (f_{i,h}, g_{i,h})_{i \in I}, p)$ where $f_{i,h}(h')=f_i(hh')$ for each history $h'\in A^{<\dN}$ and $g_{i,h}(r)=g_i(hr)$ for each run $r\in A^\dN$.

\medskip

\noindent\textbf{Strategies.} A \emph{mixed action} for player $i\in I$ is a probability distribution $x_i$ on her action set $A_i$. The set of all mixed actions for player $i$ is denoted by $\Delta(A_i)$. The \emph{support} of a mixed action $x_i$ is defined as the set of actions on which $x_i$ places positive probability:
$\text{supp}(x_i):=\{a_i\in A_i\colon x_i(a_i)>0\}$.

A \emph{mixed action profile} is a collection of mixed actions $x=(x_i)_{i\in I}$, one for each player. For each $i\in I$, we denote
by $x_{-i}$ the mixed action profile of the opponents of player $i$. A mixed action profile $x$ is called \emph{absorbing} if $p(x)>0$ and \emph{nonabsorbing} if $p(x)=0$, where $p(x)$ is the expectation of $p$ under $x$.

A strategy for player $i\in I$ is a function $\sigma_i:A^{<\dN}\to \Delta(A_i)$.
The interpretation is that if history $h\in A^{<\dN}$ arises, then $\sigma_i$
recommends to choose an action in $A_i$ randomly according to the mixed action $\sigma_i(h)$.

A strategy is called \emph{pure} if, at each history, it places probability one on some action.
A strategy is called \emph{stationary} if it prescribes the same mixed action at all histories.
Thus, we can identify a stationary strategy with a mixed action. Through this identification, we can speak of a support of a stationary strategy.

A \emph{strategy profile} is a collection of strategies $\sigma=(\sigma_i)_{i\in I}$, one for each player.
For each $i\in I$, we denote by $\sigma_{-i}$ the strategy profile of the opponents of player $i$. We can identify each stationary strategy profile with a mixed action profile, and by this identification, we speak of absorbing and nonabsorbing stationary strategy profiles.\medskip

\noindent\textbf{Expected payoffs.}  
Let $\theta$ denote the stage at which the play is absorbed; if the play is never absorbed then $\theta=\infty$. 
Every strategy profile $\sigma$ induces an expected payoff $u(\sigma)=(u_i(\sigma))_{i\in I}$ given by
\[u_i(\sigma)\,:=\, \dP_\sigma(\theta<\infty)\cdot \dE_\sigma(f_i\mid \theta<\infty)+\dP_\sigma(\theta=\infty)\cdot \dE_\sigma(g_i\mid \theta=\infty).\]
Similarly, each strategy profile $\sigma$ induces an expected payoff $u(\sigma\mid h)=(u_i(\sigma\mid h))_{i\in I}$ in each subgame $\Gamma^h$, where $h\in A^{<\dN}$.\medskip

\noindent\textbf{Minmax values.} 
The \emph{minmax value} of player $i\in I$ is defined as
\[v_i\,:=\,\inf_{\sigma_{-i}}\sup_{\sigma_i}u_i(\sigma_i,\sigma_{-i}),\]
where the infimum  is over all strategy profiles of the opponents of player~$i$
and the supremum is over all strategies of player~$i$. Intuitively, the minmax value is the highest expected payoff that player $i$ can defend against her opponents.

In a similar way, we can define the minmax value $v_i(h)$ for each player $i\in I$ in each subgame $\Gamma^h$, where $h\in A^{<\dN}$. At stage 1, for the empty history $\o$, we have $v_i(\o) = v_i$ for each player $i\in I$.\medskip

\noindent\textbf{Absorbing Blackwell games with only two players.} If the game has only two players, then by the definition of the minmax value, for each $\ep> 0$ and each history $h$, player 1 has a strategy $\sigma_1$ such that $u_2(\sigma_1,\sigma_2\mid h)\leq v_2(h)+\ep$ for every strategy $\sigma_2$ for player~2. Such a strategy is called an \emph{$\ep$-punishment strategy} of player 1 at the history $h$. One can define $\ep$-punishment strategies for player 2 similarly.

Moreover, if the game has only two players, then for each player $i\in I=\{1,2\}$ and each history $h$, by Martin (1998) or by Maitra and Sudderth (1998), we have $v_i(h)\,=\,\sup_{\sigma_i}\inf_{\sigma_j}u_i(\sigma_i,\sigma_{j}\mid h)$,
where $j=-i$ is player $i$'s opponent.\medskip

\noindent\textbf{Equilibrium.}  A strategy profile $\sigma$ is an \emph{$\ep$-equilibrium}, where $\ep\geq 0$,
if for each player $i\in I$ and each strategy $\sigma'_i$ of player $i$ we have $u_i(\sigma'_i,\sigma_{-i})\leq u_i(\sigma)+\ep$.
 The definitions imply that
if the strategy profile $\sigma$ is an $\ep$-equilibrium, then $u_i(\sigma)\geq v_i-\ep$ for each player $i\in I$.

\subsection{Classical Absorbing Games}

Two classical classes of absorbing Blackwell games are those in which the payoff functions of all players
are the average of stage-payoffs or the discounted sum of stage-payoffs. In our proof we will use these classical absorbing games as auxiliary tools.

\begin{definition}\label{def-ave-abs}
An \emph{average-payoff absorbing game} is an absorbing Blackwell game $\Gamma = (I,(A_i)_{i \in I}, (f_i, g_i)_{i \in I}, p)$, where for each player $i \in I$ there are functions $r_i:A \to \dR$ (called the absorbing stage-payoff function) and $z_i:A \to \dR$ (called the nonabsorbing stage-payoff function) such that
\begin{equation}\label{g-ave}
f_i(a_1,\ldots,a_t)\,=\,r_i(a_t),\qquad \forall (a_1,\ldots,a_t)\in A^{<\dN},\ \ \forall t\in\dN,
\end{equation}
and 
\begin{equation}\label{f-ave}
g_i(a_1,a_2,\ldots) \,=\, \limsup_{T \to \infty} \frac{1}{T} \sum_{t=1}^T z_i(a_t), \qquad \forall (a_1,a_2,\ldots) \in A^{\dN}.
\end{equation}
\end{definition}

The interpretation of the absorbing payoff function $f_i$ in Eq.~\eqref{g-ave} is that once the play is absorbed through an action profile $a_t$ at some stage $t$, then each player $i$ receives the payoff given by $r_i(a_t)$ at each stage beyond $t$, and therefore her average payoff is $r_i(a_t)$, regardless the stage-payoffs before stage $t$. 

The existence of an $\ep$-equilibrium, for every $\ep > 0$, in two-player and three-player average-payoff absorbing games
was proven by Vrieze and Thuijsman (1989) and Solan (1999), respectively. The question whether average-payoff absorbing games with at least four players always admit an $\ep$-equilibrium, for every $\ep>0$,
is a major open problem in game theory to date.

\begin{definition}\label{def-disc-abs}
Let $\lambda \in (0,1)$.
A \emph{$\lambda$-discounted absorbing game} is an absorbing Blackwell game $\Gamma = (I,(A_i)_{i \in I}, (f_i, g_i)_{i \in I}, p)$, where for each player $i \in I$ there are functions $r_i:A \to \dR$ (called the absorbing stage-payoff function) and $z_i:A \to \dR$ (called the nonabsorbing stage-payoff function) such that
\begin{equation}\label{g-disc}
f_i(a_1,\ldots,a_t) = \sum_{k=1}^{t-1}  \la(1-\lambda)^{k-1} z_i(a_k)+(1-\la)^{t-1}r_i(a_t),\qquad \forall (a_1,\ldots,a_t)\in A^{<\dN},\ \ \forall t\in\dN,
\end{equation}
and
\begin{equation}\label{f-disc}
g_i(a_1,a_2,\ldots) =  \sum_{t=1}^\infty \lambda(1-\lambda)^{t-1} z_i(a_t), \qquad\forall (a_1,a_2,\ldots) \in A^{\dN}.
\end{equation}
We say that a $\lambda$-discounted absorbing game is related to an average-payoff absorbing game if, for each player $i\in I$, the stage-payoff functions $r_i$ and $z_i$ can be chosen the same across the two games.
\end{definition}

The interpretation of the absorbing payoff function $f_i$ in Eq.~\eqref{g-disc} is that once the play is absorbed through an action profile $a_t$ at some stage $t$, then each player $i$ receives the payoff given by $r_i(a_t)$ at each stage beyond $t$, and therefore her total discounted payoff from stage $t$ onwards is $(1-\la)^{t-1}r_i(a_t)$.

It follows from 
Fink (1964) and Takahashi (1964) that every discounted absorbing game admits a stationary 0-equilibrium.

\subsection{Tail-Measurability}\label{Sec.Tail}

A function $\xi:A^\dN\to \dR$ is called \emph{tail-measurable} if whenever two runs $\vec{a}=(a_t)_{t\in \dN}\in A^{\dN}$ and $\vec{a}\hspace{0.5mm}'=(a'_t)_{t\in \dN}\in A^\dN$ satisfy $a_t=a'_t$ for every $t$ sufficiently large, then $\xi(\vec{a})=\xi(\vec{a}\hspace{0.5mm}')$. 
Intuitively, $\xi$ is tail-measurable if the value of $\xi$ is unaffected by changing finitely many coordinates of the run. Not every tail-measurable payoff function is Borel-measurable, see Rosenthal (1975) and Blackwell and Diaconis (1996).

%Let $A$ be a nonempty finite set. The set $A^\dN$ consists of all infinite sequences of elements of $A$, which we endow with the product topology.

%A set $B \subseteq A^\dN$ is called \emph{tail} if whenever $\vec a = (a_t)_{t \in \dN} \in B$ and $\vec a' = (a'_t)_{t \in \dN}$ satisfies $a'_t = a_t$ for every $t$ sufficiently large, we have $\vec a'\in B$. Intuitively, a set $B \subseteq A^\dN$ is tail if changing finitely many coordinates does not change the membership relation for $B$. The tail sets of $A^\dN$ form a sigma-algebra. A tail set is not always Borel measurable, see Rosenthal (1975) and Blackwell and Diaconis (1996).

%The collection of all subsets of $A^\dN$ that are both Borel and tail form a sigma-algebra, which is called the \emph{tail sigma-algebra} of $A^\dN$. A function $f: A^{\dN} \to \dR$ is called \emph{tail measurable} if it is measurable with respect to the tail sigma-algebra of $A^\dN$.

Many payoff functions in the literature of dynamic games are tail-measurable. For example, the average payoff defined in Eq.~\eqref{f-ave}
%\[ f(\vec a) := \limsup_{T \to \infty} \frac{1}{T} \sum_{t=1}^T z(a_t) \]
is tail-measurable. So is the limsup payoff defined by $\xi(a_1,a_2,\ldots) := \limsup_{t\to \infty} z_i(a_t)$, where $z_i:A\to\dR$ is a stage-payoff function.
% and so is the function $f(\vec a) := \phi(z(a_1),z(a_2),\ldots)$ where $\phi : \dR^\infty \to \dR$ is a Banach limit.
%The $\lambda$-discounted sum, where $\lambda \in (0,1)$,
%\[ f(\vec a) := \lambda \sum_{t=1}^\infty (1-\lambda)^{t-1} z(a_t), \]
%is however \emph{not} tail measurable.
Various classical winning conditions in the computer science literature are also tail-measurable,
such as the winning conditions B\"uchi, co-B\"uchi, parity, Streett, and M\"uller (see, e.g., Gr\"adel and Ummels (2008), Chatterjee and Henzinger (2012), and Bruy\`{e}re (2021)). The discounted payoff defined in Eq.~\eqref{f-disc} is, however, not tail-measurable.

%An iterative way to construct tail measurable functions from $A^\dN$ to $\dR$ is the following:
%If $(f_k)_{k \in \dN}$ is a sequence of tailmeasurable functions from $A^{\dN}$ to $\dR$, and if $\phi : \dR^\infty \to \dR$ is a Banach limit, then the function $f(\vec a) := \phi(f_1(\vec a), f_2(\vec a),\ldots)$ is tail measurable.

A property closely related to tail-measurability is shift-invariance  
(or prefix-in\-de\-pendence);
cf.~Chatterjee (2007). 
%A set $B \subseteq A^\dN$ is called shift invariant if whenever $(a_1,a_2,\ldots) \in B$ then $(a_2,a_3,\ldots)\in B$ and $(a',a_1,a_2,\ldots)\in B$ for every $a'\in A$. 
A payoff function $\xi: A^{\dN} \to \dR$ is called \emph{shift-invariant} if for every run $(a_1,a_2,a_3,\ldots)\in A^\dN$ it holds that:
\begin{equation}\label{eq shift inv}
    \xi(a_1,a_2,a_2,\ldots)=\xi(a_2,a_3,\ldots).
\end{equation}
For example, the average payoff and the limsup payoff are both shift-invariant. As one can verify, every shift-invariant function is also tail-measurable. The converse is not true, as the example in the Introduction illustrated. 

\begin{definition}\label{def-tail-payoff}
Consider an absorbing Blackwell game $\Gamma = (I,(A_i)_{i \in I}, (f_i, g_i)_{i \in I}, p)$. We say that player $i\in I$ has tail-measurable payoffs, if 
the following conditions hold:
\begin{enumerate}
    \item There is a function $r_i:A\to\mathbb{R}$ such that the absorbing payoff function $f_i$ satisfies
\begin{equation}\label{g-shift}
f_i(a_1,\ldots,a_t)=r_i(a_t),\qquad \forall (a_1,\ldots,a_t)\in A^{<\dN},\ \ \forall t\in\dN.
\end{equation}
\item The nonabsorbing payoff function $g_i$ is tail-measurable.
\end{enumerate}
We say that the game $\Gamma$ has tail-measurable payoffs, if each player $i\in I$ has tail-measurable payoffs.
\end{definition}

Intuitively, Condition 1 of Definition \ref{def-tail-payoff} requires that $f_i(a_1,\ldots,a_t)$ is not affected by changing the 
values 
of $(a_1,\ldots,a_{t-1})$. 
According to Definitions \ref{def-ave-abs} and \ref{def-tail-payoff}\black, every average-payoff absorbing game has tail-measurable payoffs.

%\color{red}
%\begin{comment}
%The assumption that the absorbing payoff function depends on the action profile that was played just before absorption occurred, and not on the entire history before the absorption occurred,
%is a by-product of the assumption of tail-measurable payoffs.
%When payoffs are not tail measurable, 
%the domain of absorbing payoff function should be $A^{<\dN}$.
%\end{comment}
%\color{black}

If the game has tail-measurable 
nonabsorbing payoffs and $r_i:A\to\dR$ is a function as in \Eqref{g-shift}, then for any absorbing mixed action profile $x$, i.e. $p(x)>0$, we define
\[r_i^*(x)\,:=\,\frac{\sum_{a\in A}x(a)p(a)r_i(a)}{\sum_{a\in A}x(a)p(a)},\] where $x(a)$ is the probability of the action profile $a$ under $x$. Thus, $r_i^*(x)$ is the expected absorbing payoff for player $i$ under $x$ conditionally on absorption. Note that, if $p(x)>0$, under the stationary strategy that plays $x$ at each stage, player $i$'s expected payoff is exactly $u_i(x)=r_i^*(x)$.

If player $i$ has tail-measurable payoffs, then provided no absorption has occurred, the payoff functions $g_i$ and $f_i$ do not depend on the actions taken in past stages. This property can be used to show that the minmax value of player $i$ is equal across subgames. Moreover, if there are only two players, i.e., player $i$ has only one opponent, then (i) player $i$ has a mixed action that guarantees that her expected absorbing payoff is 
not below
her minmax value, and similarly, (ii) player $-i$ has a mixed action that guarantees that player $i$'s expected absorbing payoff is 
not above
player $i$'s minmax value. Formally, the following lemma holds.

\begin{lemma}
\label{lemma:independent}
Consider an absorbing Blackwell game $\Gamma$. Assume that player $i\in I$ has tail-measurable payoffs. 
\begin{enumerate}
    \item For every history $h\in A^{<\dN}$, it holds that $v_i(h) = v_i$.
    \item Assume that there are only two players, denoted by $i$ and $-i$. Then, player $i$ has a mixed action $y_i$ with the following property: for every action $a_{-i}\in A_{-i}$ such that $p(y_i,a_{-i})>0$ we have $r^*_i(y_i,a_{-i})\geq v_i$. Also, player $-i$ has a mixed action $y_{-i}$ with the following property: for every action $a_{i}\in A_{i}$ such that $p(a_i,y_{-i})>0$ we have $r^*_i(a_i,y_{-i})\leq v_i$.
\end{enumerate}
\end{lemma}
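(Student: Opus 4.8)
The plan is to treat the two assertions separately, both resting on the determinacy of the auxiliary two-player zero-sum game obtained by regarding the opponents of player~$i$ as a single adversary with action set $\times_{j\neq i}A_j$; by Martin (1998) (or Maitra and Sudderth (1998)) this game and each of its subgames has a value, so in every subgame the infimum--supremum and supremum--infimum coincide and $\ep$-optimal strategies exist for both sides.

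For the first assertion I would first record the elementary consequence of tail-measurability that any two histories $h,h'$ of the \emph{same} length induce identical subgames: indeed $f_{i,h}(h'')=r_i(\text{last coordinate of }h'')=f_{i,h'}(h'')$ by Condition~1 of Definition~\ref{def-tail-payoff}, while $g_{i,h}(r)=g_i(hr)=g_i(h'r)=g_{i,h'}(r)$ since $hr$ and $h'r$ differ in only the first $|h|$ coordinates. Hence $v_i(h)$ depends only on $n:=|h|$; write it $V_n$, so $v_i=V_0$ and the goal becomes $V_n=V_0$ for all $n$. The second step relates $V_n$ and $V_{n+1}$ through a one-shot value operator. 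Using $g_{i,h}(a_0,a_1,\ldots)=g_{i,ha_0}(a_1,\ldots)$ one checks that the continuation of the length-$n$ subgame after a single nonabsorbing stage is, \emph{regardless of the realized profile $a_0$}, exactly the length-$(n+1)$ subgame; since the latter has value $V_{n+1}$ and admits $\ep$-optimal strategies, a one-step argument yields $V_n=\Phi(V_{n+1})$, where $\Phi(w)$ is the value of the one-shot matrix game with payoff $p(a)r_i(a)+(1-p(a))w$ to player~$i$. The final step proves that this forces constancy: $\Phi$ is nondecreasing and nonexpansive on $[0,1]$, from which I would deduce first that the increments $V_{n+1}-V_n$ all have the same sign (so $(V_n)$ is monotone, hence convergent) and second that $|V_n-V_{n+1}|$ is nondecreasing; a nondecreasing sequence of nonnegative reals with limit $0$ is identically $0$, so $V_n\equiv V_0$.

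For the second assertion the key observation is that both claims reduce to a single matrix game. Setting $M(a_i,a_{-i}):=p(a_i,a_{-i})\bigl(r_i(a_i,a_{-i})-v_i\bigr)$, claim~(i) is exactly the existence of a mixed row $y_i$ with $\sum_{a_i}y_i(a_i)M(a_i,a_{-i})\ge 0$ for every $a_{-i}$, i.e.\ $\mathrm{val}(M)\ge 0$, and claim~(ii) is the dual statement $\mathrm{val}(M)\le 0$; the desired $y_i$ and $y_{-i}$ are then the optimal strategies of the two players in $M$. To see $\mathrm{val}(M)\ge 0$ I would argue by contradiction: if the minimizer could guarantee a payoff $\le-\delta<0$ with some $y_{-i}$, then necessarily $p(a_i,y_{-i})>0$ for every $a_i$ and $r_i^*(a_i,y_{-i})\le v_i-\delta$, so the stationary strategy $y_{-i}$ absorbs almost surely against any strategy of player~$i$ and holds her conditional-on-absorption, hence total, payoff to at most $v_i-\delta$, contradicting $v_i=\inf_{\sigma_{-i}}\sup_{\sigma_i}u_i$. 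The inequality $\mathrm{val}(M)\le 0$ is symmetric: a maximizer guarantee $\ge\delta>0$ would make the stationary $y_i$ absorb almost surely with conditional reward at least $v_i+\delta$, so player~$i$ could guarantee $v_i+\delta$, contradicting $v_i=\sup_{\sigma_i}\inf_{\sigma_{-i}}u_i$ (here determinacy is used).

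I expect the main obstacle to lie in the first assertion: justifying the one-step reduction $V_n=\Phi(V_{n+1})$ rigorously for an infinite-horizon game---this is precisely where determinacy of the length-$(n+1)$ subgame and the existence of $\ep$-optimal continuation strategies are essential, since one must summarize the whole (possibly never-absorbing) continuation by the single number $V_{n+1}$---and then the monotone-plus-nonexpansive bookkeeping that upgrades the fixed-point relation $V_n=\Phi(V_{n+1})$ to genuine constancy. By contrast, once the matrix game $M$ is identified, the second assertion is a short contradiction argument built on the minmax definition of $v_i$ and its determinacy.
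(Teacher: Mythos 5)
Your proposal is correct for the two-player case, and it takes a genuinely different route from the paper's proof. For Part 1, the paper argues directly with strategies: to show $v_i(h)\geq v_i-\ep$ it distinguishes two cases (either player $i$ always has an action that absorbs with probability at least some $\rho>0$ against the opponents' mixed action profile with conditional payoff at least $v_i-\ep$, and then she plays such actions up to stage $t$; or one passes to an accumulation point $x^0_{-i}$ of the opponents' blocking profiles, lets the opponents play $x^0_{-i}$ until stage $t$ and then punish), and the converse inequality is a symmetric pasting argument. Your route instead sets up the Bellman recursion $V_n=\Phi(V_{n+1})$ and exploits monotonicity and nonexpansiveness of the one-shot operator: same-signed increments give monotone convergence, and nondecreasing $|V_n-V_{n+1}|$ with limit $0$ forces constancy. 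That fixed-point bookkeeping is sound and arguably cleaner than the paper's case analysis. For Part 2, the paper simply invokes ``the dynamic programming principle'' to assert that the one-shot game with payoff $p(a)r_i(a)+(1-p(a))v_i$ has value $v_i$, and reads off the optimal mixed actions; your argument that $\mathrm{val}(M)=0$ for $M=p\cdot(r_i-v_i)$, proved by two contradiction arguments with stationary strategies that absorb almost surely, is precisely a proof of that principle (note $\mathrm{val}(M)=0$ is equivalent to $v_i$ being a fixed point of your $\Phi$). In this respect your Part 2 is more self-contained than the paper's, and it also does not rely on Part 1, whereas the paper's Part 2 does.

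One caveat deserves attention. Part 1 of the lemma is stated for any number of players, and $v_i$ is defined as an infimum over strategy \emph{profiles} of the opponents, who randomize independently. Your framing merges the opponents into a single adversary with action set $\times_{j\neq i}A_j$ and invokes Martin's determinacy for that merged game; when $|I|\geq 3$ the merged adversary can correlate, so the value of the merged game may be strictly below $v_i$, and its determinacy says nothing about $v_i$ as defined. Fortunately, the machinery you flag as ``essential'' is not actually needed: defining $\Phi(w):=\inf_{x_{-i}}\sup_{x_i}\dE_{x_i,x_{-i}}\bigl[p(a)r_i(a)+(1-p(a))w\bigr]$ with the infimum over \emph{product} mixed action profiles, the recursion $V_n=\Phi(V_{n+1})$ follows from the inf-sup definition alone, by pasting: for the upper bound the opponents play an $\ep$-infimizer of $\Phi(V_{n+1})$ followed by $\ep$-minmaxing continuation profiles (which exist by definition of the infimum), and for the lower bound player $i$ plays a one-shot best reply to $\sigma_{-i}(\o)$ followed by an $\ep$-best reply to the continuation of $\sigma_{-i}$ (which exists by definition of the supremum). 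Monotonicity and nonexpansiveness of this inf-sup operator hold verbatim, so your Part 1 argument goes through for all $|I|$ once rephrased this way. Similarly, in Part 2 the direction $\mathrm{val}(M)\leq 0$ does not need determinacy either: if $y_i$ absorbed almost surely with conditional payoff at least $v_i+\delta$, then $\inf_{\sigma_{-i}}\sup_{\sigma_i}u_i \geq \inf_{\sigma_{-i}}u_i(y_i,\sigma_{-i})\geq v_i+\delta$ contradicts the definition of $v_i$ directly; using determinacy there is legitimate, though, since the paper records it for two-player games.
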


\begin{proof}\\
\noindent\textbf{Proof of Part 1.}  Take an arbitrary history $h\in A^{t-1}$ at some stage $t$. We show that $v_i(h)=v_i$. 

As the actions before stage $t$ do not influence $f_i$, we have $v_i(h)=v_i(h')$ for all $h'\in A^{t-1}$.  Fix an $\ep > 0$.\smallskip

\noindent\textsc{Step 1:} We show that $v_i(h)\geq v_i-\ep$. We distinguish between two cases.\smallskip

\noindent\textsc{Case 1:} For some $\rho > 0$, against every mixed action profile $x_{-i}$, player $i$ has an action $a_i \in A_i$ such that $p(a_i,x_{-i}) \geq \rho$ and $r^*_i(a_i,x_{-i}) \geq v_i - \ep$.

In this case, take an arbitrary strategy profile $\sigma_{-i}$ of the opponents of player $i$. Let $\sigma_i$ be the following strategy of player~$i$: at any history $h'$, play an action $a_i$ that satisfies $p(a_i,\sigma_{-i}(h')) \geq \rho$ and $r^*_i(a_i,\sigma_{-i}(h')) \geq v_i - \ep$.
Then $\prob_{\sigma_i,\sigma_{-i}}(\theta < \infty \mid h) = 1$ and $u_i(\sigma_i,\sigma_{-i} \mid h) \geq v_i - \ep$. Hence $v_i(h)\geq v_i-\ep$.\smallskip

\noindent\textsc{Case 2:} Case 1 does not hold.

Then, for every $\rho > 0$, there is a mixed action profile $x_{-i}^\rho$ such that for every $a_i \in A_i$ we have $p(a_i,x^\rho_{-i}) < \rho$ or $r^*_i(a_i,x^\rho_{-i}) < v_i - \ep$.
Let $x_{-i}^0$ be an accumulation point of $(x_{-i}^\rho)_{\rho>0}$ as $\rho$ goes to 0. Then, for every $a_i \in A_i$ we have $p(a_i,x^0_{-i}) = 0$ or $r^*_i(a_i,x^0_{-i}) \leq v_i - \ep$.

Let $\sigma'_{-i}$ be a strategy profile such that $u_i(\sigma_i,\sigma'_{-i}\mid h')\leq v_i(h')+\ep=v_i(h)+\ep$ for each history $h' \in A^{t-1}$ and each strategy $\sigma_i$ of player $i$. Let $\sigma_{-i}$ be the strategy profile that plays the mixed action profile $x^0_{-i}$ before stage $t$, and if no absorption occurs before stage $t$, then at stage $t$ switches to $\sigma'_{-i}$. 
By construction, $u_i(\sigma_i,\sigma_{-i}) \leq \max\{v_i-\ep,v_i(h)+\ep\}$  for each strategy $\sigma_i$. By the definition of $v_i$, we 
have
$v_i\leq v_i(h)+\ep$.\smallskip

\noindent\textsc{Step 2:} We show that $v_i\geq v_i(h)-\ep$. Take an arbitrary strategy profile $\sigma_{-i}$. It suffices to prove that there is a strategy $\sigma_i$ such that $u_i(\sigma_i,\sigma_{-i}) \geq v_i(h)- \ep$.

By the definition of $v_i(h)$, for every mixed action profile $x_{-i}$ there is an action $a_i$ such that either $p(a_i,x_{-i})=0$ or $r^*_i(a_i,x_{-i})\geq v_i(h)$; otherwise player $i$ could not defend $v_i(h)$ in the subgame at $h$ if his opponents are playing $x_{-i}$ at each stage.

Let $\sigma'_i$ be a strategy such that $u_i(\sigma'_i,\sigma_{-i}\mid h') \geq v_i(h')- \ep=v_i(h)- \ep$ for each history $h'\in A^{t-1}$.
Let $\sigma_i$ be the strategy that at any history $h'$ before stage $t$, plays an action $a_i$ such that either $p(a_i,\sigma_{-i}(h'))=0$ or $r^*_i(a_i,\sigma_{-i}(h'))\geq v_i(h)$, and if no absorption occurs before stage $t$, then at stage $t$ switches to $\sigma'_i$. Then, by construction, $u_i(\sigma_i,\sigma_{-i}) \geq \min\{v_i(h),v_i(h)-\ep\}=v_i(h)-\ep$.\medskip

\noindent\textbf{Proof of Part 2.} Assume that player $i$ has only one opponent. Consider the two-player zero-sum one-shot game in which: (i) the action sets are $A_i$ and $A_{-i}$, (ii) if action pair $a\in A$ is chosen, then the payoff is equal to $p(a)r_i(a)+(1-p(a))v_i$, and (iii) player $i$ maximizes the payoff and player $-i$ minimizes it. 
Intuitively, the payoff is equal to the expected minmax value of player $i$ in the continuation 
subgame; indeed, the play absorbs with probability $p(a)$ and then the absorbing payoff is $r_i(a)$ for player $i$, and the game moves to the next stage with probability $1-p(a)$ and then, by Part 1, player $i$'s continuation minmax value is $v_i$. 

By the dynamic programming principle, the value of this one-shot game is equal to $v_i$. 
Let $y_i$ be any mixed action of player~$i$ that is optimal in this one-shot game. We show that $y_i$ satisfies the property in Part 2 of the lemma. To this end, let $a_{-i} \in A_{-i}$ with $p(y_i,a_{-i})>0$. By the choice of $y_i$, 
\[\sum_{a_i\in A_i}y_i(a_i)\cdot\Big(p(a_i,a_{-i})r_i(a_i,a_{-i})+(1-p(a_i,a_{-i}))v_i\Big)\,\geq\,v_i,\]
yielding 
\[r_i^*(y_i,a_{-i})\,=\,\frac{\sum_{a_i\in A_i}y_i(a_i)p(a_i,a_{-i})r_i(a_i,a_{-i})}{\sum_{a_i\in A_i}y_i(a_i)p(a_i,a_{-i})}\,\geq\,v_i.\]

Similarly, if $y_{-i}$ is any mixed action of player~$-i$ that is optimal in this one-shot game, then $y_{-i}$ satisfies the property in Part 2 of the lemma: for every action $a_{i} \in A_{i}$ such that $p(a_i,y_{-i})>0$ we have $r^*_i(a_i,y_{-i}) \leq v_i$.
\end{proof}

\begin{remark}\rm 
In Part 1 of Lemma \ref{lemma:independent}, we showed that tail-measurability implies that player $i$'s minmax value $v_i(h)$ is the same for each history $h$. This is no longer true if there are multiple nonabsorbing states. 
In that case, under tail-measurability, the player's minmax value $v_i(h)$ can also depend on the current stage of the history~$h$. For an illustration, we refer to Example 4.1 in Flesch and Solan (2022). 
\end{remark}

\section{Main Theorem}\label{sec-mainres}

Now we present the main 
result
of the paper.

\begin{theorem}
\label{theorem:1}
Every two-player absorbing Blackwell game with tail-measurable payoffs admits an $\ep$-equilibrium, for every $\ep > 0$.
\end{theorem}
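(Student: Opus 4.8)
The plan is to transplant the classical Vrieze--Thuijsman (1989) analysis of two-player average-payoff absorbing games into the present setting, handling the general tail-measurable nonabsorbing payoff $g_i$ by means of Kolmogorov's $0$--$1$ law. The auxiliary tools I would lean on are already in place: Lemma~\ref{lemma:independent}, which tells me that each player's minmax value is the same in every subgame and that each player has a mixed action controlling, relative to $v_i$, the payoff of player $i$ conditional on absorption; and Martin's determinacy, by which each player has, in every subgame, an $\ep$-punishment strategy holding her opponent to $v_j+\ep$, while being able to guarantee herself $v_i-\ep$.

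The first genuine step is the reduction furnished by tail-measurability. If $x$ is a nonabsorbing stationary profile, so that $p(x)=0$, then under $x$ the realized run is an i.i.d.\ sequence of action profiles; hence its tail $\sigma$-algebra is $\prob_x$-trivial, and tail-measurability of $g_i$ forces $g_i$ to equal a constant $\gamma_i(x)$, $\prob_x$-almost surely. Along any nonabsorbing stationary play the nonabsorbing payoff therefore collapses to a single number, and for the purposes of the incentive analysis the game now behaves like a classical absorbing game in which staying under $x$ pays $\gamma_i(x)$ and absorbing under $x$ pays $r^*_i(x)$.

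Next I would carry out the discounted-limit construction. For each $\la\in(0,1)$ I take a stationary $0$-equilibrium $x^\la$ of an auxiliary $\la$-discounted absorbing game whose absorbing stage-payoffs are the given $r_i$ (such equilibria exist by Fink (1964) and Takahashi (1964)), pass to a subsequence along which $x^\la\to x^0$ and the equilibrium payoffs converge, and split into the two Vrieze--Thuijsman cases. If the limit $x^0$ is absorbing, a suitably perturbed fully-absorbing stationary profile is an $\ep$-equilibrium of $\Gamma$: because the absorbing payoff function $r_i$ is shared by the auxiliary game and $\Gamma$, the equilibrium payoffs and the deterrence of absorbing deviations transfer directly, while any deviation avoiding absorption is answered by a Martin punishment strategy at $v_j$. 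If the limit $x^0$ is nonabsorbing, I would construct a nonstationary profile in which the players mostly play $x^0$, collecting $\gamma_i(x^0)$ by the $0$--$1$ law, and revert to mutual punishment at $v_j$ the moment a statistical test on the realized actions and on the non-occurrence of absorption flags a deviation; here the tail-measurable equilibrium existence of Ashkenazi-Golan, Flesch, Predtetchinski, Solan (2022) for pure Blackwell games is the natural source for the nonabsorbing phase.

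The main obstacle is the nonabsorbing case. In the average-payoff world, continuing forever under $x^0$ reproduces exactly the auxiliary nonabsorbing value, whereas here it yields the $g_i$-value $\gamma_i(x^0)$, which a priori has no relation to $v_i$: with the natural choice of a constant nonabsorbing auxiliary stage-payoff equal to $v_i$, the auxiliary limit payoff is $v_i$, but the true continuation payoff is $\gamma_i(x^0)$. The crux is thus to secure individual rationality, $\gamma_i(x^0)\geq v_i-\ep$ for both players, simultaneously with deviation-proofness --- in particular deterring a player who would force immediate absorption for an absorbing payoff exceeding $\gamma_i(x^0)$, and calibrating the detection test so that any profitable deviation is caught and punished down to $v_j$ before it pays off. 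Marrying the $0$--$1$-law continuation payoff to the absorbing-game threat machinery, i.e.\ selecting $x^0$ (equivalently, the nonabsorbing evaluation used to locate it) so that the tail-measurable payoff dominates the minmax value, is precisely where the recent tools for tail-measurable payoffs must be fused with the classical construction.
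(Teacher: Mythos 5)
Your overall architecture is the paper's: you form an auxiliary absorbing game whose nonabsorbing payoff is a constant near $v_i$, take stationary discounted equilibria $x^\la \to x^0$, and split on whether $x^0$ is absorbing; your treatment of the absorbing limit is essentially the paper's Case 1 (Proposition \ref{lemma eq1}), and your Kolmogorov zero--one law observation (under a nonabsorbing stationary profile the run is i.i.d., so $g_i$ is $\prob_{x^0}$-a.s.\ equal to a constant $\gamma_i(x^0)$) is correct, though the paper never needs it. The genuine gap is exactly where you place it and do not close it: the nonabsorbing limit. First, Vrieze--Thuijsman gives \emph{three} cases, not two: your ``nonabsorbing'' case conflates the paper's Case 2 of Theorem \ref{TV-thm} (where some $(\widehat a_i, x^0_{-i})$ is absorbing and good for both players, and the equilibrium \emph{absorbs} via a $\delta$-perturbation toward $\widehat a_i$) with the hard Case 3 (every absorbing deviation from $x^0$ pays at most $v_i-\ep$); your uniform prescription ``play $x^0$ and collect $\gamma(x^0)$'' is the wrong move in Case 2 situations. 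Second, and more seriously, in Case 3 you need $\gamma_i(x^0)\geq v_i-\ep$ for both players simultaneously, and ``selecting $x^0$ so that the tail-measurable payoff dominates the minmax value'' is in general impossible: under i.i.d.\ play the achievable vectors $\gamma(x^0)$ form a restricted set (with frequency-type payoffs, $\gamma_1+\gamma_2$ is pinned down by the mixture, whereas non-stationary play can push both limsup frequencies near 1), while the minmax pair can lie outside that set --- this is precisely the situation $v_1+v_2>1$ in Case 3 of Example \ref{ex bm}, where any $\ep$-equilibrium is necessarily non-stationary and history-dependent. Finally, your appeal to Ashkenazi-Golan, Flesch, Predtetchinski, Solan (2022) for the nonabsorbing phase does not apply as stated: that result requires $p\equiv 0$ on \emph{all} action profiles, and restricting the game to the support of $x^0$ changes the minmax values, so the restricted-game equilibrium need not be individually rational in $\Gamma$ and hence cannot be deviation-proof there.

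The paper closes this case by a different mechanism, which is the one ingredient your proposal lacks. It introduces the sets $Y_1,Y_2$ of safe mixed actions (Lemma \ref{lemma:independent}, Part 2) and dichotomizes: if some player has a good absorbing response against a safe action of the opponent, one gets a triple $(a_i, y_{-i})$ with properties (a)--(c) and builds a Case-2-style absorbing equilibrium (the last proposition of Section \ref{sec-case3}); otherwise the hypotheses of Theorem \ref{difficultcase} (Lemma 3.29 of Flesch and Solan (2022), the ``property of the alternatives'') hold, and that theorem yields a \emph{history} $h$ whose subgame admits an $\ep$-equilibrium --- its proof runs through subgame $\delta$-minmax strategies, L\'evy's zero--one law to locate a history where the conditional payoff concentrates on an individually rational vector $w$, and regularity of measures to build the detection-and-punishment scheme, as sketched in Example \ref{ex bm}. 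The $\ep$-equilibrium of $\Gamma$ then plays $x^0$ for the first $t-1$ stages (deviations there are unprofitable by Condition 3(b) of Theorem \ref{TV-thm}) and continues with the subgame equilibrium \emph{as if} $h$ had occurred, which tail-measurability makes payoff-irrelevant. Without this theorem, or a substitute argument producing individually rational continuation play from some history, the crux you identify remains open, so the proposal as written does not constitute a proof.
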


A related result is proven in Ashkenazi-Golan, Flesch, Predtetchinski, and Solan~(2022). They studied multi-player Blackwell games with no absorption, i.e., when $p(a) = 0$ for every action profile $a \in A$, and showed that these games admit an $\ep$-equilibrium for every $\ep > 0$, provided that the payoffs are tail-measurable (since the absorption probabilities are zero, the
functions 
$(f_i)_{i \in I}$ play no role). In comparison, our main result allows some of the entries to be absorbing with positive probability,
yet it is restricted to two-player games.

Another strongly related result is proven in Flesch and Solan (2022). They studied two-player stochastic games with finite state and action spaces, and showed that these games admit an $\ep$-equilibrium for every $\ep > 0$, provided that the payoffs are shift-invariant. In comparison, our main result uses the weaker condition of tail-measurability on the payoffs, yet it allows only one nonabsorbing state. 

The idea of the proof of Theorem \ref{theorem:1} is as follows. We define an auxiliary average-payoff absorbing game (cf. Section \ref{sec-aux}). By using the method and results of Vrieze and Thuijsman (1989), we can categorize this auxiliary game into three essentially different cases. In the first two cases (cf.~Sections \ref{sec-case1} and \ref{sec-case2}), the $\ep$-equilibrium constructed by Vrieze and Thuijsman for the auxiliary game leads us to a similar $\ep$-equilibrium in the absorbing Blackwell game. In the third case (cf.~Section \ref{sec-case3}), we find an $\ep$-equilibrium by using arguments from Flesch and Solan (2022). \medskip

To illustrate some of the ideas of the proof,
we study an example, which is a generalization of the famous Big Match, cf. Gillette (1957), Blackwell and Ferguson (1968).

\begin{example}\label{ex bm}
\rm Consider the following two-player absorbing Blackwell game $\Gamma$. Each player has two actions: the actions of player~1 are $C$ (continue) and $Q$ (quit), and the actions of player~2 are $L$ (left) and $R$ (right). As long as player~1 plays $C$ the play continues, and as soon as player~1 plays $Q$ the play is absorbed. Formally, $A_1=\{C,Q\}$, $A_2=\{L,R\}$, $p(C,L)=p(C,R)=0$ and $p(Q,L)=p(Q,R)=1$.

The absorbing payoffs are as follows: $(1,0)$ if the play is absorbed under $(Q,L)$, and $(0,1)$ if the play is absorbed under $(Q,R)$. Formally, denoting $f=(f_1,f_2)$, we have $f(a_1,\ldots,a_t)=(1,0)$ if $a_t=(Q,L)$ and $f(a_1,\ldots,a_t)=(0,1)$ if $a_t=(Q,R)$
(see Figure~\arabic{figurecounter}, where absorbing payoffs are denoted by $\star$).
\[
\begin{array}{c||c|c|}
\text{Game }\Gamma & L & R\\
\hline\hline
C & & \\
\hline
Q & 1,0\ ^\star & 0,1 \ ^\star\\
\hline
\end{array}\vspace{0.2cm}
\]
\centerline{Figure~\arabic{figurecounter}: The game in Example~\ref{ex bm}.}
\addtocounter{figurecounter}{1}
\medskip

If player~1 always continues, the nonabsorbing payoff function for each player $i\in\{1,2\}$ is some bounded, Borel-measurable and tail-measurable function $g_i$. Let $g=(g_1,g_2)$, and denote by $R_C$ the set of all runs in which player~1 always plays $C$.

Since by quitting player~1 guarantees a payoff at least 0 to herself and at most 1 to player~2, we have $v_1 \geq 0$ and $v_2 \leq 1$. We distinguish 3 cases.

\textsc{Case 1:} $v_1 > 1$. Then, we necessarily have $g_1(r) > 1$ for every run $r \in R_C$.
One $\ep$-equilibrium is the one in which both players follow a run $r^*\in R_C$ such that $g_2(r^*)\geq \sup_{r \in R_C} g_2(r)-\ep$. (This will correspond to Case 3 of Theorem \ref{TV-thm}, see Proposition \ref{propcase31}.)

\textsc{Case 2:} $v_1\leq 1$ and $v_1 + v_2 \leq 1$. Consider the stationary strategy $x_1=(1-\frac{\ep}{2})C + \frac{\ep}{2} Q$ for player~1, and the stationary strategy $x_2=(1-\alpha)L + \alpha R$ for player~2, where $\alpha := \max\{0, v_2\}$. Note that $(x_1,x_2)$ is absorbing, and it gives expected payoff $u_1(x_1,x_2)=1-\alpha=\min\{1,1-v_2\}\geq v_1$ and $u_2(x_1,x_2)=\alpha\geq v_2$.

Then, one $\ep$-equilibrium is the one in which player~1 plays $x_1$ and player~2 plays $x_2$, supplemented with standard statistical tests to monitor the other player for deviations:
if the realized action-frequency of player~2 is not close to $(1-\eta)L + \eta R$ (by using the law of large numbers),
player~2 is punished at her minmax level;
if sufficiently many stages have passed without player~1 selecting $Q$, 
player~1 is punished at her minmax level. (This will correspond to Case 2 of Theorem \ref{TV-thm}, see Proposition \ref{lemma eq2}.)

\textsc{Case 3:} $v_1\leq 1$ and $v_1 + v_2 > 1$. In this case, $v_1+v_2>1$ implies that there is no stationary strategy pair that gives satisfactory absorbing payoffs to both players, and this is a difficult case. (This will belong to Case 3 of Theorem \ref{TV-thm}, see Proposition \ref{propcase31}.)

We explain the idea in a brief manner. In the arguments below, $\delta>0$ should be thought of as a quantity small compared to $\ep$ and satisfying $v_1 + v_2 \geq 1 + 2\delta$.
Let $\sigma_2$ be a subgame $\delta$-minmax strategy of player~2;
that is, a strategy that guarantees to player~2 at least $ v_2 - \delta$ in all subgames.
If player~1 quits at history $h$,
player~2's payoff is $\sigma_2(R \mid h)$.
Since $\sigma_2$ is subgame $\delta$-minmax,
it follows that $\sigma_2(R \mid h) \geq v_2 - \delta$, for every $h \in A^{<\mathbb{N}}$.
The condition $v_1 + v_2 \geq 1 + 2\delta$
implies that for every $h \in A^{<\mathbb{N}}$
\[ \sigma_2(L \mid h) \,=\, 1-\sigma_2(R \mid h) 
\,\leq\, 1 - v_2 + \delta \,\leq\,v_1 - \delta. \]
This implies that player 1 by quitting cannot get more than $v_1-\delta$. Hence, player 1's best response to $\sigma_2$ is the strategy $\sigma_1^C$ that continues forever. Let $w \in \dR^2$ be a vector such that 
$\prob_{\sigma_1^C,\sigma_2}\bigl(\|g(r) - w\|_\infty \leq \delta\bigr) > 0$, where $r$ denotes the random variable for the run. This means that the payoff under $(\sigma_1^C,\sigma_2)$ is close to $w$ with positive probability. 
Since the payoffs are bounded and Borel-measurable,
L\'evy's zero-one law implies that there is a history $h$ such that 
$\prob_{\sigma_1^C,\sigma_2}\bigl(\|g(r) - w\|_\infty \leq \delta \mid h\bigr) \geq 1-\delta$. If $\delta>0$ is sufficiently small, by the choice of $\sigma_2$ and because $\sigma_1^C$ is a best response, $w_1\geq v_1-\ep$ and $w_2\geq v_2-\ep$.

Since all probability measures on $A^\dN$ are regular,
there is a closed set $ D\subseteq A^\dN$ of runs such that 
$\|g(r) - w\|_\infty \leq \delta$ for every $r\in D$ and
$\prob_{\sigma_1^C,\sigma_2}(D) \geq 1-2\delta$. Since $D$ is closed, its complement is open.
It follows that for every run $r \not\in D$
there is a stage $k \in \dN$ such that all runs in $A^\dN$ that coincide with $r$ in the first $k$ stages are also not in $D$.
In other words, at stage $k$ it is known that the run is not in $D$.
A $2\ep$-equilibrium suggests itself:
in the first $\ell$ stages, where $\ell$ is the length of $h$,
the players play $(C,R)$.
Afterwards the players follow $(\sigma_1^C,\sigma_2)$,
as if the history at stage $\ell$ was $h$,
until we reach some stage $k$ such that at stage $k$ it is known that the run is not in $D$.
From stage $k+1$ and on player 1 punishes player 2 at her minmax level. $\Diamond$
\end{example}

For the remaining part of the paper, fix a two-player absorbing Blackwell game $\Gamma$ with tail-measurable payoffs. 
For each player $i\in\{1,2\}$, 
let $r_i$ be
the function given by Definition \ref{def-tail-payoff}. Fix also an arbitrary $\ep>0$. We will show that $\Gamma$ admits a $2\ep$-equilibrium.

\section{The Auxiliary Game}\label{sec-aux}

Since the absorbing Blackwell game $\Gamma$ has tail-measurable payoffs, by Part 1 of Lemma \ref{lemma:independent}, the minmax values do not change as long as no absorption occurs. \smallskip

We derive an auxiliary average-payoff absorbing game from $\Gamma$ by setting the nonabsorbing payoff for player~1 to be the constant $v_1-\ep$, and for player~2 to be the constant $v_2-\ep$. Formally, this auxiliary game is defined as follows.

\begin{definition}
Let $G^\infty_{v-\ep}$ be the average-payoff absorbing game $(I,(A_i)_{i \in I}, (f_i, g^\ep_i)_{i \in I}, p)$
where $g^\ep_i(a_1,a_2,\ldots) = v_i-\ep$ for each player $i \in I$ and each%
\footnote{Thus, following Definition \ref{def-ave-abs}, in $G^\infty_{v-\ep}$ the nonabsorbing stage-payoff function $z_i$ is given by $z_i(a)=v_i-\ep$ for each player $i\in I$ and each action profile $a\in A$.}
run $(a_1,a_2,\ldots)\in A^\dN$.
\end{definition}

 Note that if a stationary strategy pair $x$ is absorbing, then $x$ induces the same expected payoff in the auxiliary game $G^\infty_{v-\ep}$ and in the original game $\Gamma$.

Let $v_1^\infty$ and $v_2^\infty$ denote the minmax values of the players in the game $G^\infty_{v-\ep}$. 
As we now show, 
$v_1^\infty$ and $v_2^\infty$
are close to the minmax values in the original game $\Gamma$.

\begin{lemma}\label{new-minmax}
$v_1-\ep\leq v_1^\infty\leq v_1$ and $v_2-\ep\leq v_2^\infty\leq v_2$.
\end{lemma}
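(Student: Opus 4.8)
The plan is to establish all four inequalities by exhibiting, for each player, a single stationary strategy in the auxiliary game $G^\infty_{v-\ep}$ that bounds the minmax value, drawing directly on Part~2 of Lemma~\ref{lemma:independent}. Fix a player $i\in\{1,2\}$. The only difference between $\Gamma$ and $G^\infty_{v-\ep}$ is the nonabsorbing payoff: the absorbing payoffs $r_i$ coincide in the two games, while in $G^\infty_{v-\ep}$ the payoff of player~$i$ on any nonabsorbed run equals the constant $v_i-\ep$, which lies in the interval $[v_i-\ep,v_i]$. This sandwiching of the nonabsorbing payoff, together with control of the conditional absorbing payoff, will yield both bounds.

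For the lower bound $v_i^\infty\geq v_i-\ep$, I would let player~$i$ play, stationarily, the mixed action $y_i$ furnished by Part~2 of Lemma~\ref{lemma:independent}, and check that it guarantees at least $v_i-\ep$ against every strategy $\sigma_{-i}$ of the opponent. The point is that, since player~$i$ plays $y_i$ at every stage, whenever absorption occurs at a stage the conditional expected absorbing payoff is $r_i^*(y_i,x_{-i})$ for the (possibly history-dependent) mixed action $x_{-i}$ used by the opponent at that stage; and because $r_i^*(y_i,x_{-i})$ is a weighted average of the quantities $r_i^*(y_i,a_{-i})\geq v_i$ over $a_{-i}\in\supp(x_{-i})$ with $p(y_i,a_{-i})>0$, it is itself at least $v_i$. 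Hence $\dE(\text{absorbing payoff}\mid\theta<\infty)\geq v_i$, while the payoff on $\{\theta=\infty\}$ equals $v_i-\ep$, so the total expected payoff in $G^\infty_{v-\ep}$ is at least $v_i-\ep$. Taking $\sup_{\sigma_i}$ and then $\inf_{\sigma_{-i}}$ gives $v_i^\infty\geq v_i-\ep$.

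For the upper bound $v_i^\infty\leq v_i$, I would symmetrically let the opponent play, stationarily, the mixed action $y_{-i}$ from Part~2, and show it holds player~$i$ to at most $v_i$. The same averaging argument now gives that, conditional on absorption at any stage, the expected absorbing payoff for player~$i$ equals $r_i^*(x_i,y_{-i})\leq v_i$, a weighted average of the quantities $r_i^*(a_i,y_{-i})\leq v_i$; and the payoff on $\{\theta=\infty\}$ is $v_i-\ep\leq v_i$. Thus player~$i$'s expected payoff in $G^\infty_{v-\ep}$ against $y_{-i}$ is at most $v_i$, and taking $\sup_{\sigma_i}$ and then $\inf_{\sigma_{-i}}$ yields $v_i^\infty\leq v_i$. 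Applying the argument to both $i=1$ and $i=2$ gives all four inequalities.

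The routine part is the bookkeeping with the absorption stage $\theta$. The one step that deserves care --- and is the only real obstacle --- is the passage from the pure-action guarantee in Part~2 of Lemma~\ref{lemma:independent} (which bounds $r_i^*(y_i,a_{-i})$ only for pure $a_{-i}$) to a bound valid against an arbitrary, history-dependent opponent strategy. This is precisely where stationarity of $y_i$ (resp.\ $y_{-i}$) is used: because player~$i$ commits to $y_i$ at every history, at each stage the conditional absorbing payoff is a convex combination of the pure-action values $r_i^*(y_i,a_{-i})$, so the per-stage bound $\geq v_i$ (resp.\ $\leq v_i$) holds uniformly, and therefore passes to the overall conditional expectation $\dE(\,\cdot\mid\theta<\infty)$ regardless of how the opponent mixes across stages.
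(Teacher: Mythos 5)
Your proposal is correct and follows essentially the same route as the paper's proof: both bounds are obtained by having the relevant player stationarily play the mixed action $y_i$ (resp.\ $y_{-i}$) from Part~2 of Lemma~\ref{lemma:independent}, so that conditional on absorption the payoff is at least $v_i$ (resp.\ at most $v_i$), while the nonabsorbing payoff in $G^\infty_{v-\ep}$ is the constant $v_i-\ep$. Your added care in passing from the pure-action guarantee to arbitrary history-dependent opponent strategies is exactly the bookkeeping the paper leaves implicit.
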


\begin{proof}
We only prove $v_1-\ep\leq v_1^\infty\leq v_1$; the proof is similar for player~2.

First we prove that $v_1-\ep\leq v_1^\infty$. In view of Part 2 of Lemma \ref{lemma:independent}, in the game $\Gamma$, player 1 has a mixed action $y_1$ with the following property: for any action $a_2 \in A_2$ for which $(y_1,a_2)$ is absorbing we have $r^*_1(y_1,a_2) \geq v_1$. 
In the game $G^\infty_{v-\ep}$, if player 1 always plays $y_1$, player~1 obtains an expected payoff of at least $v_1 - \ep$.
Thus, $v_1-\ep\leq v_1^\infty$.

We now prove that $v_1^\infty \leq v_1$. In view of Part 2 of Lemma \ref{lemma:independent}, in the game $\Gamma$, player 2 has a mixed action $y_2$ with the following property: for any action $a_1 \in A_1$ for which $(a_1,y_2)$ is absorbing we have $r^*_1(a_1,y_2) \leq v_1$. 
In the game $G^\infty_{v-\ep}$, if player $2$ always plays $y_2$, player~1 cannot get an expected payoff higher than $v_1$.
Thus $v_1^\infty \leq v_1$.
\end{proof}\bigskip

We apply the method and results in Vrieze and Thuijsman (1989) and Thuijsman (1992) to the game $G^\infty_{v-\ep}$. To this end, for each discount factor $\la\in(0,1)$, we also consider the $\la$-discounted absorbing game $G^\la_{v-\ep}$ related to $G^\infty_{v-\ep}$ (i.e., these games have the same stage-payoff functions, cf. Definition \ref{def-disc-abs}). Let $x^\la$ be a stationary $\la$-discounted equilibrium in $G^\la_{v-\ep}$,
for each $\la\in(0,1)$. As the space of stationary strategies is compact,\footnote{Or, alternatively, by using the theory of semi-algebraic sets.}
there is a sequence $(\la_n)_{n=1}^\infty$ converging to 0 such that $x^{\la_n}$ converges to some stationary strategy pair $x^0$ as $n\to\infty$. 
\smallskip

Since player~$i$'s nonabsorbing payoff in the game $G^\infty_{v-\ep}$ is $v_i-\ep$, the study of Vrieze and Thuijsman (1989) implies that $G^\infty_{v-\ep}$ falls into at least one 
of three (not exclusive) categories.\footnote{The category to which $G^\infty_{v-\ep}$ belongs depends on the choices of $x^\lambda$ and $x^0$. In Theorem \ref{TV-thm}, Cases 1, 2, and 3 correspond to respectively to Cases A, C, and B in Vrieze and Thuijsman (1989).}

\begin{theorem}[Vrieze and Thuijsman (1989), Thuijsman (1992)]\label{TV-thm}\
At least one of the following three conditions holds for the game $G^\infty_{v-\ep}$:
\begin{enumerate}
\item  The following holds:
\begin{enumerate}
    \item $x^0$ is absorbing: $p(x^0)>0$.
    \item  The payoff under $x^0$ is at least the minmax value in $G^\infty_{v-\ep}$: for each player $i\in\{1,2\}$ we have $r^*_i(x^0) \geq v_i^\infty$. Consequently, by Lemma \ref{new-minmax}, for each player $i\in\{1,2\}$ we have
$r^*_i(x^0) \geq v_i-\ep$.
\item   There is no profitable absorbing deviation from $x^0$:
for each player $i\in\{1,2\}$ and each action $a_i \in A_i$ such that $p(a_i,x^0_{-i}) > 0$, we have $r^*_i(a_i,x^0_{-i}) \leq r^*_i(x^0)$. 
\end{enumerate}

\item There is a player $i\in\{1,2\}$ and an action $\widehat a_i \in A_i$ such that $x^0$, $i$, and $\widehat a_i$ satisfy:
\begin{enumerate}
\item   $x^0$ is nonabsorbing: $p(x^0) = 0$.
\item   $(\widehat a_i,x^0_{-i})$ is absorbing: $p(\widehat a_i,x^0_{-i}) > 0$.
\item   The payoff under $(\widehat a_i,x^0_{-i})$ is at least the minmax value  in $G^\infty_{v-\ep}$: for each player $j=1,2$ we have $r^*_j(\widehat a_i,x^0_{-i}) \geq v_j^\infty$. Consequently, by Lemma \ref{new-minmax}, for each player $j\in\{1,2\}$ we have
$r^*_j(\widehat a_i,x^0_{-i}) \geq v_j-\ep$.
\item   The payoff under $(\widehat a_i,x^0_{-i})$ is at least as good as any absorbing deviation from $x^0$:
for each player $j=1,2$ and each action $a_j \in A_j$ such that $p(a_j,x^0_{-j}) > 0$, we have $r^*_j(a_j,x^0_{-j}) \leq r^*_j(\widehat a_i,x^0_{-i})$.
\end{enumerate}

\item   The following holds:
\begin{enumerate}
\item   $x^0$ is nonabsorbing: $p(x^0) = 0$.
\item   There is no profitable absorbing deviation from $x^0$:
for each player $i\in\{1,2\}$ and each action $a_i \in A_i$ such that $p(a_i,x_{-i}^0) > 0$, we have $r^*_i(a_i,x_{-i}^0) \leq v_i-
\ep$.
\end{enumerate}
\end{enumerate}
\end{theorem}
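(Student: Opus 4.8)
The plan is to follow the classical route of Vrieze and Thuijsman: analyse the limit $x^0$ of the stationary discounted equilibria $x^{\la_n}$ of the related discounted games $G^{\la_n}_{v-\ep}$, and read off the appropriate one of the three cases from the behaviour of $x^0$. The starting point is the closed form of the discounted payoff under a stationary profile. Writing $\gamma_i^\la(x)$ for player~$i$'s $\la$-discounted payoff in $G^\la_{v-\ep}$ when $x$ is played at every stage, the one-stage (Shapley) equation together with the constant nonabsorbing stage-payoff $v_i-\ep$ gives
\[\gamma_i^\la(x)=\frac{p(x)\,r^*_i(x)+(1-p(x))\,\la(v_i-\ep)}{\la+(1-\la)p(x)}.\]
Passing to a subsequence, I would arrange that not only $x^{\la_n}\to x^0$ but also the equilibrium payoffs $w_i^{\la_n}:=\gamma_i^{\la_n}(x^{\la_n})$, the conditional absorbing payoffs $r^*_i(x^{\la_n})$, the normalized absorption distribution $a\mapsto x^{\la_n}(a)p(a)/p(x^{\la_n})$, and the ratio $p(x^{\la_n})/\la_n\in[0,\infty]$ all converge, say to $\bar w_i$, $\bar R_i$, $\widehat c$, and $t$. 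From the displayed formula, $\bar w_i$ is then a convex combination of $\bar R_i$ and $v_i-\ep$ with weights $t/(1+t)$ and $1/(1+t)$ (and $\bar w_i=\bar R_i$ if $t=\infty$). Finally, since in a Nash equilibrium each player earns at least her discounted minmax value, and the $\la$-discounted minmax value of player~$i$ converges to $v_i^\infty$ as $\la\to0$, I would record that $\bar w_i\geq v_i^\infty$ for each player.

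The second ingredient is the unimprovability of the stationary discounted equilibrium: for every player~$i$ and action $a_i$, switching to $a_i$ against $x^{\la_n}_{-i}$ cannot raise player~$i$'s continuation payoff above $w_i^{\la_n}$. Taking $n\to\infty$ in this inequality turns it into the limiting statement that every absorbing deviation is unprofitable in the limit, namely $r^*_i(a_i,x^0_{-i})\leq \bar w_i$ for every action $a_i$ with $p(a_i,x^0_{-i})>0$. These two ingredients already settle the absorbing case. If $p(x^0)>0$, the displayed formula forces $\bar w_i=r^*_i(x^0)$; combined with $\bar w_i\geq v_i^\infty$ this is condition 1(b), and the limiting deviation inequality is exactly condition 1(c), while 1(a) holds by assumption. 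Hence Case~1 holds.

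It remains to treat $p(x^0)=0$, and here the dichotomy is whether some absorbing deviation is beneficial. If $r^*_i(a_i,x^0_{-i})\leq v_i-\ep$ for every player~$i$ and every $a_i$ with $p(a_i,x^0_{-i})>0$, then conditions 3(a),(b) hold verbatim and we are in Case~3. Otherwise some player, say player~1, has an absorbing action with $r^*_1(a_1,x^0_{-1})>v_1-\ep$; the limiting deviation inequality then gives $\bar w_1>v_1-\ep$, which via the convex-combination form forces $t>0$, and a short computation using $v_i-\ep\leq v_i^\infty\leq\bar w_i$ yields $\bar R_i\geq v_i^\infty$ for both players. Thus the limit absorption distribution $\widehat c$ gives each player at least her minmax value in expectation.

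What remains — and this is the step I expect to be the main obstacle — is to convert this ``good on average'' absorption into a single profile of the required form: an action $\widehat a_i$ of one player such that the absorbing profile $(\widehat a_i,x^0_{-i})$ simultaneously (c)~gives both players at least their minmax value and (d)~dominates every absorbing deviation of either player. The difficulty is that the two players' requirements must be met by one and the same profile, whereas $\widehat c$ only controls averages. Resolving this requires the finer asymptotics of $x^{\la_n}$ — the rates at which the vanishing absorbing entries shrink and the structure of $\supp(\widehat c)$ (in particular that simultaneous two-player deviations are second-order and carry no mass in the limit) — together with the equilibrium conditions of both players and a maximal-element selection among the absorbing deviations. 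This is precisely the technical heart of Vrieze and Thuijsman (1989) and Thuijsman (1992), whose construction I would import to produce the profile witnessing Case~2.
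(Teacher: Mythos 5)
Your proposal follows precisely the route of the source this theorem is quoted from, and the comparison here is somewhat degenerate: the paper itself contains no proof of Theorem \ref{TV-thm}. It is imported as a black box from Vrieze and Thuijsman (1989) and Thuijsman (1992), with only a footnote recording that Cases 1, 2, and 3 correspond to Cases A, C, and B of Vrieze and Thuijsman. Relative to that, what you prove is correct and supplies more detail than the paper does: the closed form of $\gamma_i^\lambda(x)$ is right; the representation of the limit payoff $\bar w_i$ as the convex combination $\tfrac{t}{1+t}\bar R_i+\tfrac{1}{1+t}(v_i-\ep)$ is right; when $p(x^0)>0$ one indeed has $t=\infty$, hence $r_i^*(x^0)=\bar w_i\geq v_i^\infty$ and, via the limit of the one-shot deviation inequality, condition 1(c); and when $p(x^0)=0$ your dichotomy yields either Case 3 verbatim or else $t>0$ together with $\bar R_i\geq v_i^\infty$ for both players. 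The step you explicitly leave open --- converting the ``good on average'' limit absorption distribution into a single action $\widehat a_i$ of one player for which $(\widehat a_i,x^0_{-i})$ satisfies 2(b)--(d) for both players simultaneously --- is exactly the technical core of Case C in Vrieze and Thuijsman (1989), so importing it from there is legitimate and is no more of a gap than the paper's own treatment, which cites the entire statement. The two background facts you invoke without proof --- existence of stationary discounted equilibria (Fink, Takahashi) and convergence of the discounted minmax values to $v_i^\infty$ as $\lambda\to 0$ (Kohlberg, Mertens--Neyman, for zero-sum absorbing games) --- are likewise standard and consistent with the paper's level of citation.
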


Our proof for the existence of an $\ep$-equilibrium in the absorbing Blackwell game $\Gamma$ will distinguish between three cases, depending on which case the auxiliary game $G^\infty_{v-\ep}$ falls into.
As we will see,
the structure of the $\ep$-equilibrium in $\Gamma$ when Case~1 or Case~2 of Theorem~\ref{TV-thm} holds, is similar to the corresponding $\ep$-equilibrium in Vrieze and Thuijsman (1989).
The challenging part is Case~3 of Theorem~\ref{TV-thm},
where the $\ep$-equilibrium in Vrieze and Thuijsman (1989) is nonabsorbing. 

\section{The Proof of Theorem \ref{theorem:1}}\label{sec-proof}

Recall that we fixed a two-player absorbing Blackwell game $\Gamma$ with tail-measurable payoffs, and an arbitrary $\ep>0$. 
We will show that $\Gamma$ admits a $2\ep$-equilibrium.

Let $M$ be a bound on the payoffs in the absorbing Blackwell game $\Gamma$:
\[M\,:=\,\max_{i\in\{1,2\}}\max\{\sup_{h\in A^{<\dN}}|g_i(h)|,\sup_{r\in A^\dN}|f_i(r)|\}.\]

\subsection{Case 1 of Vrieze and Thuijsman}\label{sec-case1}

In this section we consider Case 1 of Theorem \ref{TV-thm}, where the limit strategy profile $x^0$ is absorbing.

\begin{proposition}\label{lemma eq1}
Assume that Case 1 of Theorem \ref{TV-thm} holds. Then, the absorbing Blackwell game $\Gamma$ admits a $2\varepsilon$-equilibrium.
\end{proposition}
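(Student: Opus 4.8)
The plan is to let both players play the limit stationary profile $x^0$ at every stage, supplemented by a simple absorption deadline backed up by minmax punishments, and to show that no player can gain more than $2\ep$. Since $x^0$ is absorbing by Case 1(a), under $(x^0,x^0)$ the play is absorbed almost surely, and by Case 1(b) the resulting expected payoff to each player $i$ is $r^*_i(x^0)\geq v_i-\ep$; this will be the target equilibrium payoff. Concretely I would fix a large integer $N$ and define the profile $\sigma^*$ by: each player $i$ plays $x^0_i$ in stages $1,\dots,N$, and if the play has not been absorbed by stage $N$ then from stage $N+1$ on player $i$ switches to an $\ep$-punishment strategy against her opponent, which by Part~1 of Lemma~\ref{lemma:independent} holds the opponent to at most $v_{-i}+\ep$ in every subgame. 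Under honest play the deadline is reached only with probability $(1-p(x^0))^N$, which tends to $0$, so the honest payoff of each player $i$ is at least $r^*_i(x^0)$ up to a vanishing term.

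The crux is bounding a unilateral deviation, say by player~$1$, while player~$2$ follows $\sigma^*_2$. I would split the run according to the absorption time $\theta$. On the event $\{\theta\le N\}$ player~$2$ has been playing $x^0_2$ throughout, so conditional on absorbing at any stage while player~$1$ plays a pure action $a_1$, the expected absorbing payoff equals $r^*_1(a_1,x^0_2)$, which by Case 1(c) is at most $r^*_1(x^0)$; hence this event contributes at most $\Prob(\theta\le N)\,r^*_1(x^0)$. On the event $\{\theta> N\}$ player~$2$ punishes from stage $N+1$, so since $f_1$ depends only on the absorbing action profile and $g_1$ is tail-measurable, the continuation payoff coincides with the subgame payoff and is at most $v_1+\ep\le r^*_1(x^0)+2\ep$, using $r^*_1(x^0)\ge v_1-\ep$. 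Combining the two events yields a deviation payoff of at most $r^*_1(x^0)+2\ep$, so the gain over the honest payoff is at most $2\ep$ once $N$ is taken large enough that the deadline term is negligible. The symmetric argument bounds player~$2$'s deviations.

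The main obstacle is the stalling deviation captured by $\{\theta>N\}$: a player who avoids absorption could in principle reap a high nonabsorbing payoff $g_i$, and controlling this is exactly where tail-measurability enters. Two points make it work. First, every beneficial deviation must avoid absorption, because Case 1(c) neutralizes each absorbing deviation; hence non-absorption is a perfect and observable trigger, so no statistical (frequency) test is needed, only the deterministic deadline. Second, once the deadline is passed, the opponent's $\ep$-punishment strategy caps the nonabsorbing payoff at $v_i+\ep$, which is legitimate precisely because Part~1 of Lemma~\ref{lemma:independent} guarantees $v_i(h)=v_i$ in every subgame, so the punishment target is the same no matter when or where the deadline is crossed.

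The only remaining technical care is managing constants: I would choose $N$ so that the false-trigger probability under honest play contributes less than the slack already present in the $2\ep$ budget. Since the existence statement of the theorem is insensitive to the precise constant multiplying $\ep$, this bookkeeping is routine and does not affect the argument.
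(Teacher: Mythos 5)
Your proposal is correct and takes essentially the same approach as the paper: play $x^0$ for $N$ stages, switch to punishment strategies if the absorption deadline is reached, use Condition~1(b) for the target payoff, Condition~1(c) to neutralize absorbing deviations, and Part~1 of Lemma~\ref{lemma:independent} to make punishment valid at whatever history the deadline is crossed. The only difference is that the paper uses $\frac{\ep}{2}$-punishment strategies rather than $\ep$-punishment strategies, which makes the $2\ep$ budget close exactly (your version yields $2\ep$ plus the vanishing deadline term, forcing the rescaling of $\ep$ that you correctly note is harmless).
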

\begin{proof} Our arguments are close to those of Vrieze and Thuijsman (1989), and therefore we only provide a sketch. 

%Assume $p(x^0)>0$. 
%Following Vrieze and Thuijsman, the following properties hold:

%\begin{enumerate}
%\item   The payoff under $x^0$ is at least the minmax value in $G^\infty_{v-\ep}$: for each player $i\in\{1,2\}$ we have $r^*_i(x^0) \geq v_i^\infty$. 
%\color{red}
%By Lemma \ref{new-minmax}, 
%this property implies that \color{red}
%$r^*_i(x^0) \geq v_i-\ep$
%for each player $i\in\{1,2\}$.
%\color{black}
%\item   There is no profitable absorbing deviation from $x^0$:
%for each player $i\in\{1,2\}$ and each action $a_i \in A_i$ such that $p(a_i,x^0_{-i}) > 0$, we have $r^*_i(a_i,x^0_{-i}) \leq r^*_i(x^0)$.
%\end{enumerate}

By Condition 1(b) of Theorem \ref{TV-thm}, there is a $\eta>0$ sufficiently small so  that $\eta\cdot 2M\leq \ep$ and
\begin{equation}
\label{choice-delta}
(1-\eta)\cdot r_i^*(x^0)+\eta\cdot M\,\geq\,v_i-\frac{3\ep}{2}.
\end{equation}
Let $N\in\mathbb{N}$ satisfy
\begin{equation}
\label{choice-N}
1-(1-p(x^0))^N\,\geq\, 1-\eta.
\end{equation}

The game $\Gamma$ admits an $2\ep$-equilibrium $\sigma^*$ of the following form:  the players play the mixed action profile $x^0$ for the first $N$ stages, which by \Eqref{choice-N} leads to absorption with probability at least $1-\eta$,
and if no absorption has taken place by stage $N$, then each player switches to an $\frac{\ep}{2}$-punishment strategy against her opponent.%
\footnote{The result of Proposition \ref{lemma eq1} also holds for games with more than two players, but then punishment must be directed against a specific player. In order to identify the player who deviated from $x$, the players conduct statistical tests on the realized action frequencies, see Solan (1999, Lemma 5.2).}
Let us verify that $\sigma^*$ is a $2\ep$-equilibrium in $\Gamma$.
By \Eqref{choice-delta}, 
the expected payoff under $\sigma^*$ is at least $v_i - \frac{3\ep}{2}$.
Because of Condition~1(c) of Theorem \ref{TV-thm} and the punishment strategies after stage $N$, an absorbing deviation from $\sigma^*$ 
can increase the deviator's payoff by at most $\eta\cdot 2M\leq \ep$, whereas 
a nonabsorbing deviation from $\sigma^*$ can increase the deviator's payoff by at most $2\ep$.
\end{proof}

\begin{example}\label{ex-abs}\rm \textbf{(An absorbing equilibrium)} Consider the following 
variation of Example~\ref{ex bm}. The action sets and absorbing payoffs are as in Example~\ref{ex bm}.
%and appear in the figure below (absorbing payoffs denoted by $\star$). 
%\[
%\begin{array}{c||c|c|}
%\text{Game }\Gamma & L & R\\
%\hline\hline
%C & & \\
%\hline
%Q & 1,0\ ^\star & 0,1 \ ^\star\\
%\hline
%\end{array}\vspace{0.2cm}
%\]
%two-player absorbing Blackwell game $\Gamma$: Each player has two actions: the actions of player~1 are $C$ (continue) and $Q$ (quit), and the actions of player~2 are $L$ (left) and $R$ (right). As long as player~1 plays $C$ the play continues, and as soon as player~1 plays $Q$ the play is absorbed.\footnote{The transitions are just as in the Big Match, cf. Gillette (1957), Blackwell and Ferguson (1968).}

%The absorbing payoffs are as follows: $(1,0)$ if the play is absorbed under $(Q,L)$, and $(0,1)$ if the play is absorbed under $(Q,R)$. 
If player~1 always continues, the nonabsorbing payoff function $f_1$ for player~1 gives payoff 1 if $L$ is played only finitely many times, and payoff 0 otherwise. For player~2, $f_2$ is an arbitrary tail-measurable function.

%The absorbing payoffs (denoted by $\star$) are as in the figure below:

The minmax value of player~1 in $\Gamma$ is $v_1=0$, because player~1's expected payoff is at most $\delta$ if player~2 plays $L$ with probability $\delta$ and $R$ with probability $1-\delta$ at each stage, where $\delta\in(0,1)$.

Take an arbitrary $\ep>0$. The auxiliary absorbing game $G^\infty_{v-\ep}$, has stage-payoffs $(v_1-\ep,v_2-\ep)=(-\ep,v_2-\ep)$ under both $(C,L)$ and $(C,R)$. The game $G^\infty_{v-\ep}$ can be represented as follows:
\[
\begin{array}{c||c|c|}
\text{Game }G^\infty_{v-\ep} & L & R\\
\hline\hline
C & (-\ep,v_2-\ep) & (-\ep,v_2-\ep)\\
\hline
Q & 1,0 \ ^\star& 0,1\ ^\star\\
\hline
\end{array}\vspace{0.2cm}
\]

The minmax value of player~1 in $G^\infty_{v-\ep}$ is $v_1^\infty=0$. Note that $v_i-\ep\leq v_i^\infty\leq v_i$ for each player $i\in\{1,2\}$, in accordance with Lemma \ref{new-minmax}.

For every discount factor $\la>0$, in the game $G^\la_{v-\ep}$, playing $(Q,R)$ at each stage is a stationary $\la$-discounted equilibrium, so we can take $x^\la=x^0=(Q,R)$. Thus, $x^0$ is absorbing (cf.~Case 1 of Theorem \ref{TV-thm}).

We obtain the following $\ep$-equilibrium for the original game $\Gamma$, for $\ep\in(0,1)$: At stage 1, the players are supposed to play $(Q,R)$, yielding absorption with payoff $(0,1)$. If player~1 deviates to $C$, then at stage 2 player~2 switches to an $\ep$-punishment strategy: at each further stage, player~2 plays $L$ with probability $\ep$ and $R$ with probability $1-\ep$, which makes sure that player~1's payoff is at most $\ep$. $\Diamond$
\end{example}

\subsection{Case 2 of Vrieze and Thuijsman}\label{sec-case2}

In this section we consider Case 2 of Theorem \ref{TV-thm}.

\begin{proposition}\label{lemma eq2}
Assume that Case 2 of Theorem \ref{TV-thm} holds. Then, the absorbing Blackwell game $\Gamma$ admits an $2\varepsilon$-equilibrium.
\end{proposition}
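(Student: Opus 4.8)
The plan is to reproduce, in the Blackwell setting, the equilibrium that Vrieze and Thuijsman (1989) build in their Case~C, replacing their average-payoff punishments by the subgame-uniform punishments furnished by Lemma~\ref{lemma:independent}. Fix the player $i$ and the action $\widehat a_i$ of Case~2 of Theorem~\ref{TV-thm}. The first step is a book-keeping computation: for every $\eta\in(0,1]$ the perturbed stationary action $x_i^\eta:=(1-\eta)x^0_i+\eta\,\widehat a_i$ played against $x^0_{-i}$ is absorbing, and since $p(x^0)=0$ annihilates every contribution of $\supp(x^0_i)$, the numerator and denominator defining $r^*_j(x_i^\eta,x^0_{-i})$ are both proportional to $\eta$; hence $r^*_j(x_i^\eta,x^0_{-i})=r^*_j(\widehat a_i,x^0_{-i})$ for each player $j$, independently of $\eta$. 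By condition (c) of Case~2 this common value is at least $v_j-\ep$.

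I would then define $\sigma^*$ with a main phase and a punishment phase. In the main phase player~$i$ plays the stationary $x_i^\eta$ (the small parameter $\eta$ is fixed only at the very end) and player~$-i$ plays $x^0_{-i}$, while each player monitors the other. Player~$i$ compares, along every prefix, the realized action frequencies of player~$-i$ with $x^0_{-i}$; player~$-i$ checks that absorption occurs on the time scale $1/\eta$. If either test fails, the tester switches to an $\tfrac{\ep}{2}$-punishment strategy against the opponent. Such strategies exist in every subgame and hold the punished player to her minmax value up to $\tfrac{\ep}{2}$, by Lemma~\ref{lemma:independent}(1) and the equality of the minmax and maxmin values recalled in Section~\ref{sect-AbsBl}.

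Verifying the equilibrium is routine except for one deviation. On the honest path, absorption occurs before the time-out with probability near $1$, so each player earns within $\ep$ of $r^*_j(\widehat a_i,x^0_{-i})\geq v_j-\ep$, the rare failure-to-absorb and false-positive events contributing at most $O(1)$ times a probability we force below $\tfrac{\ep}{2}$. An absorbing deviation of player~$i$ to another action is unprofitable by condition (d); a deviation of player~$i$ that postpones absorption forever triggers player~$-i$'s time-out and, because $g_i$ is tail-measurable and therefore blind to the finite pre-punishment segment, is capped at $v_i+\tfrac{\ep}{2}$; and any deviation of player~$-i$ that stays within the tolerance alters the conditional absorbing payoff by only $O(\text{tolerance})$, since $r^*_{-i}(\widehat a_i,\cdot)$ is Lipschitz where $p(\widehat a_i,\cdot)$ is bounded away from $0$.

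The single genuine difficulty is the remaining deviation of player~$-i$: tilting her mixed action toward the outcomes she prefers and gambling that player~$i$ absorbs before the tilt is detected. This is precisely the Big-Match tension of Case~3 in Example~\ref{ex bm}. Because player~$i$ absorbs at the fixed rate $\eta$ independently of player~$-i$, a persistent frequency shift of size $\beta$ is detected only after about $1/\beta^{2}$ stages, by which time absorption has occurred with probability roughly $\min\{\eta/\beta^{2},1\}$, so the expected extra payoff is of order $\min\{\eta/\beta^{2},1\}\cdot\beta$, maximized near $\beta\sim\sqrt\eta$. I would therefore take $\eta$ of order $\ep^{2}$ and use a per-prefix tolerance shrinking like $1/\sqrt{k}$ at stage~$k$, so that a concentration inequality (Azuma--Hoeffding, as player~$-i$ may randomize history-dependently) simultaneously keeps honest false-positives negligible and forces detection of any profitable tilt before the accumulated absorption probability turns it into more than $\ep$ of gain. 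Carrying out this calibration is exactly the estimate of Vrieze and Thuijsman, to which I would appeal for the final bookkeeping.
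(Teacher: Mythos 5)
Your overall architecture is the same as the paper's: play the perturbed stationary profile $\bigl((1-\eta)x^0_i+\eta\,\widehat a_i,\;x^0_{-i}\bigr)$, monitor player $-i$'s empirical action frequencies, impose a time-out after order $1/\eta$ stages, and punish at the minmax level (which Lemma \ref{lemma:independent} makes history-independent) upon detection or failure to absorb; like the paper, you delegate the final statistical bookkeeping to Vrieze and Thuijsman. Your opening observation that $r^*_j\bigl((1-\eta)x^0_i+\eta\,\widehat a_i,\,x^0_{-i}\bigr)=r^*_j(\widehat a_i,x^0_{-i})$ exactly, for every $\eta$, is correct (it uses $p(x^0)=0$) and is slightly cleaner than the inequality the paper works with.

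However, your deviation analysis for player $-i$ has a genuine gap. You model the situation as if absorption can only be triggered by player $i$'s perturbation (``player $i$ absorbs at the fixed rate $\eta$ independently of player $-i$''). That is false: Case 2 only gives $p(x^0)=0$, so while every action in $\supp(x^0_{-i})$ is nonabsorbing against $x^0_i$, player $-i$ may deviate to an action $a_{-i}\notin\supp(x^0_{-i})$ with $p(x^0_i,a_{-i})>0$. Tilting weight $\beta$ onto such an action produces absorption at rate roughly $\beta\, p(x^0_i,a_{-i})$ per stage --- proportional to $\beta$, not to $\eta$ --- so under your $1/\sqrt{k}$ tolerance the play absorbs with probability close to $1$ before detection (absorption in about $1/\beta$ stages versus detection after about $1/\beta^2$ stages), and your bound of order $\min\{\eta/\beta^2,1\}\cdot\beta$ on the deviation gain fails for this class of deviations. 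What saves the construction is Condition 2(d) of Theorem \ref{TV-thm} applied to $j=-i$: any $a_{-i}$ with $p(a_{-i},x^0_i)>0$ satisfies $r^*_{-i}(a_{-i},x^0_i)\le r^*_{-i}(\widehat a_i,x^0_{-i})$, so absorption through this channel yields no gain, and only the $\widehat a_i$-channel (probability $O(\eta)$ per stage) remains --- which is exactly what your calibration controls. You invoke Condition 2(d) only for player $i$'s deviations, never for player $-i$'s, so this step is missing. The paper sidesteps the issue by splitting player $-i$'s deviations: reweighting within $\supp(x^0_{-i})$, where only the $\widehat a_i$-channel exists and the statistical test suffices, versus playing outside $\supp(x^0_{-i})$, which is punished immediately upon a single observation, with the one-stage absorbing gain capped by 2(d) and by the smallness of the perturbation. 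Adopting that split, or simply adding the invocation of 2(d) for $j=-i$, repairs your argument.
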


\begin{proof}
As in Case~1,
our arguments are very close to those of Vrieze and Thuijsman (1989), and therefore we only provide a sketch
of the proof. 

Let $\delta\in(0,1)$. By Condition~2(c) of Theorem \ref{TV-thm}, there is a $\eta>0$ sufficiently small so  that $\eta\cdot 2M\leq \ep$ and
\begin{equation}
\label{choice-delta2}
(1-\eta)\cdot r_i^*((1-\delta)x^0_i+\delta \widehat{a}_i, x^0_{-i})+\eta\cdot M\,\geq\,v_i-\frac{3\ep}{2}.
\end{equation}
Then, let $N\in\mathbb{N}$ so that
\begin{equation}
\label{choice-N2}
1-(1-p((1-\delta)x^0_i+\delta \widehat{a}_i, x^0_{-i}))^N\,\geq\, 1-\eta.
\end{equation}

If $\delta>0$ is sufficiently small, the game $\Gamma$ admits an $\ep$-equilibrium $\sigma^*$ of the following form: the players play the mixed action profile $\widehat{x}=\left((1-\delta)x^0_i+\delta \widehat{a}_i, x^0_{-i}\right)$ for the first $N$ stages, which by \Eqref{choice-N2} leads to absorption with probability at least $1-\eta$. 
The players switch to $\frac{\ep}{2}$-punishment strategies if one of the following events occurs: (i) no absorption takes place by stage $N$, or (ii) during the play, a statistical test (based on the law of large numbers) detects a deviation by player $-i$: the frequency with which player $-i$ chooses her actions is not sufficiently close to $x^0_{-i}$, including the event that player $-i$ chooses an action that has probability zero under $x^0_{-i}$. 
Provided that $\delta$ is sufficiently small
and $N$ is sufficiently large,
the probability of false detection 
of deviation
is small.\footnote{Examples of such statistical test can be found in the proof of Lemma~5.1 in Solan (1999). }

Note that by \Eqref{choice-delta2}, 
the expected payoff under $\sigma^*$ is at least $v_i - \frac{3\ep}{2}$. There is no individual deviation that would increase the payoff of the deviator by more than $\ep$. Indeed, (a) player $i$ cannot gain more than $\eta\cdot 2M\leq\ep$ by an absorbing deviation due to Condition 2(d) of Theorem \ref{TV-thm};
(b) nonabsorbing deviations of player~$i$ trigger punishment after stage $N$,
and hence it can lead to a gain of at most $\frac{3\ep}{2}+\frac{\ep}{2}=2\ep$;
(c) deviations of 
player $-i$ within the support of $x^0_{-i}$ 
are either detected statistically 
and punished,
or do not affect by much the probability to be absorbed in $N$ stages
and the expected absorbing payoff;
and (d) a deviation of 
player $-i$ outside the support of $x^0_{-i}$ is followed by a punishment, and punishment is effective due to Condition~2(d) of Theorem \ref{TV-thm}
and since $\delta$ is small.
\end{proof}

\subsection{Case 3 of Vrieze and Thuijsman}\label{sec-case3}

In this section we consider Case 3 of Theorem \ref{TV-thm}.

Let $Y_1$ be the set of mixed actions $y_1$ for player 1 with the following property: for every action $a_2\in A_2$ such that $p(y_1,a_2)>0$ we have $r^*_1(y_1,a_2)\geq v_1$. Similarly, let $Y_2$ be the set of mixed actions $y_2$ for player 2 with the following property: for every action $a_1\in A_1$ such that $p(a_1,y_2)>0$ we have $r^*_2(a_1,y_2)\geq v_2$. In view of Part 2 of Lemma~\ref{lemma:independent}, these sets are nonempty. One can view these mixed actions as safe, as they do not allow bad expected absorbing payoffs for the players.

Theorem \ref{difficultcase} below follows from Lemma 3.29 in Flesch and Solan (2022) (which is closely related to Proposition 32 in Vieille (2000a), called the
\emph{property of the alternatives}). The proof of Lemma 3.29 in Flesch and Solan (2022) is complicated, but could be somewhat simplified in our case, since absorbing Blackwell games have only one nonabsorbing state. Yet, in this part of the proof of Theorem \ref{theorem:1}, the simplification would be much less significant, and it is less clear how to use classical techniques from absorbing games
to obtain it.
Therefore, we do not repeat the proof of Flesch and Solan (2022) and only state the 
result. 

\begin{theorem}[Flesch and Solan (2022)]\label{difficultcase}
Assume the following for the absorbing Blackwell game $\Gamma$:
\begin{enumerate}
    \item All combinations of mixed actions in $Y$ are nonabsorbing: $p(y_1,y_2)=0$ for every $(y_1,y_2)\in Y_1\times Y_2$. 
    \item Player 2 has no good absorbing response to any mixed action in $Y_1$: for every $y_1\in Y_1$ and $a_2\in A_2$ such that $p(y_1,a_2)>0$ we have $r^*_2(y_1,a_2)<v_2$.
    \item Player 1 has no good absorbing response to any mixed action in $Y_2$: for every $y_2\in Y_2$ and $a_1\in A_1$ such that $p(a_1,y_2)>0$ we have $r^*_1(a_1,y_2)<v_1$.
\end{enumerate} 
Then, for every $\ep>0$, there is a history $h$ such that the subgame that starts at $h$ admits an $\ep$-equilibrium.\footnote{The required history is the history $h^*$ in Step 3.1 of the proof of Lemma 3.29 in Flesch and Solan (2022).}
\end{theorem}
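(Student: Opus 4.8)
The plan is to carry out, for arbitrary tail-measurable payoffs, the argument illustrated for Case~3 of Example~\ref{ex bm}, with Conditions~1--3 playing the role that the constant-sum structure of the absorbing payoffs played in the Big Match. Fix $\delta>0$ small relative to $\ep$. The starting point is a strategy $\sigma_2$ of player~2 that is simultaneously \emph{safe} and \emph{value-defending}: it plays at every history a mixed action in $Y_2$, and it guarantees player~2 at least $v_2-\delta$ in every subgame. (The subgame guarantee is meaningful because, by Part~1 of Lemma~\ref{lemma:independent}, player~2's value is the constant $v_2$, defendable by Martin~(1998).) Against such a $\sigma_2$, Condition~3 gives $r_1^*(a_1,\sigma_2(\cdot\mid h))<v_1$ for every absorbing action $a_1$; by finiteness this holds with a uniform gap $r_1^*(a_1,\sigma_2(\cdot\mid h))\leq v_1-c$, while Condition~1 guarantees that player~1 always has a nonabsorbing continuation available (any action in $Y_1$). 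Since player~1 can in any case secure $v_1-\delta$ against $\sigma_2$, taking $\delta<c$ renders every absorbing move strictly inferior, so her best response $\sigma_1^{*}$ to $\sigma_2$ is nonabsorbing and $(\sigma_1^{*},\sigma_2)$ induces a nonabsorbing play.

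Next I would localize to a good history. Under $(\sigma_1^{*},\sigma_2)$ the realized payoff is $g(r)=(g_1(r),g_2(r))$ for the random run $r$, and both players fare well: player~2 obtains at least $v_2-\delta$ because $\sigma_2$ is value-defending, while player~1 obtains at least $v_1-\delta$ because $\sigma_1^{*}$ is a best response and she could otherwise fall back on a value-defending strategy. Since $g$ is bounded and Borel-measurable, L\'evy's zero-one law provides a vector $w=(w_1,w_2)$ and a history $h^{*}$ reached with positive probability such that $\prob_{\sigma_1^{*},\sigma_2}(\|g(r)-w\|_\infty\leq\delta\mid h^{*})\geq 1-\delta$. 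Applying the subgame-defending property of $\sigma_2$ and the best-response property of $\sigma_1^{*}$ in the subgame at $h^{*}$, and letting $\delta\to 0$, yields $w_1\geq v_1-\ep$ and $w_2\geq v_2-\ep$. This $h^{*}$ is precisely the history whose existence the theorem asserts.

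It then remains to exhibit an $\ep$-equilibrium in the subgame at $h^{*}$. Because all Borel measures on $A^\dN$ are regular, there is a closed set $D$ of runs on which $\|g(r)-w\|_\infty\leq\delta$ and with $\prob_{\sigma_1^{*},\sigma_2}(D\mid h^{*})\geq 1-2\delta$; as $D$ is closed, for every run that leaves $D$ there is a finite stage at which this is already certain. The prescribed play is: starting at $h^{*}$, both players follow $(\sigma_1^{*},\sigma_2)$, and as soon as it is certain that the run has left $D$, player~1 switches to holding player~2 down to her minmax level $v_2+\delta$. Deterrence splits along the two players: player~1 has no profitable deviation because $\sigma_1^{*}$ is already a best response to $\sigma_2$ and, by Conditions~1 and~3, absorbing deviations only lose; player~2's deviations either keep the run inside $D$, where her payoff stays within $\delta$ of $w_2$, or push the run out of $D$, after which she is punished to $v_2+\delta\leq w_2+\ep$. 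A routine estimate shows that no deviation gains more than $2\ep$, which proves the theorem since $\ep$ was arbitrary.

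The genuine difficulty is concentrated in the first paragraph: the existence of a single strategy $\sigma_2$ that is at once value-defending for player~2 and safe (valued in $Y_2$), together with its symmetric counterpart for player~1 (where Condition~2 takes over the role of Condition~3). In the Big Match this is immediate, since the stationary action $(1-v_2)L+v_2R$ does both at once and the constant-sum payoffs turn ``player~2's absorbing payoff is high'' directly into ``player~1's absorbing payoff is low.'' For general payoffs there is no such relation, and reconciling nonabsorbing value-defense (which forces a player to react to the opponent) with safety against absorption (staying in $Y_i$), so that neither player is ever tempted to absorb, is exactly the content of the \emph{property of the alternatives} (Lemma~3.29 of Flesch and Solan~(2022), Proposition~32 of Vieille~(2000a)). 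Its proof is the delicate, technical part, which is why the statement is quoted here rather than reproved.
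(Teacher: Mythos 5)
You should first note a point of calibration: the paper does not prove Theorem~\ref{difficultcase} at all. It imports the statement from Lemma~3.29 of Flesch and Solan (2022) (the ``property of the alternatives''), explicitly declining to reproduce the argument. So your decision to quote that lemma for the hard existence step is consistent with the paper's own treatment; the question is whether the finishing argument you build on top of the deferred step is sound. It is not, for a concrete reason.

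Your construction is asymmetric: on path the players follow $(\sigma_1^*,\sigma_2)$, where $\sigma_2$ is $Y_2$-valued and value-defending, but $\sigma_1^*$ is merely a best response to $\sigma_2$. Nothing forces $\sigma_1^*$ to be $Y_1$-valued, so Condition~2 of the theorem --- the only tool that caps player~2's absorbing payoffs --- cannot be invoked. Player~2 may therefore have an action $a_2$ with $p(\sigma_1^*(h),a_2)>0$ and $r^*_2(\sigma_1^*(h),a_2)$ close to $1$, far above $w_2+\ep$. Your deterrence analysis (``deviations either keep the run inside $D$ or push it out of $D$'') silently assumes every deviation generates a run; an absorbing deviation generates no run, is never caught by the ``left $D$'' test, and cannot be deterred by punishment, which only affects the nonabsorbed continuation. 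This failure is invisible in Example~\ref{ex bm}, where player~2 has no absorbing actions, but it is fatal in general: repairing it requires the on-path profile to consist of \emph{two} strategies that are simultaneously safe ($Y_i$-valued) and value-defending, with a nonabsorbing and mutually acceptable outcome --- and proving that such a pair exists is precisely the content of the lemma you deferred. In other words, your deferral swallows the heart of the construction, not a preliminary step, so what remains is not a proof. A secondary flaw: the ``uniform gap by finiteness'' claim $r^*_1(a_1,\sigma_2(\cdot\mid h))\leq v_1-c$ is unjustified. There are countably many histories, not finitely many, and even over all of $Y_2$ the supremum of $r^*_1(a_1,y_2)$ on $\{y_2\in Y_2\colon p(a_1,y_2)>0\}$ can equal $v_1$ without being attained (the gap can vanish as $p(a_1,y_2)\to 0$), so no single $c>0$ need exist and the conclusion that player~1's best response is nonabsorbing does not follow as stated.
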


We show that $\Gamma$ has an $\ep$-equilibrium when Case 3 of Theorem \ref{TV-thm} holds, by distinguishing between two cases depending on whether the conditions of Theorem \ref{difficultcase} hold. 

\begin{proposition}\label{propcase31} Assume that Case 3 of Theorem \ref{TV-thm} holds and the condition of Theorem \ref{difficultcase} also holds. Then, the absorbing Blackwell game $\Gamma$ admits an $\ep$-equilibrium.
\end{proposition}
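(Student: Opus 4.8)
The plan is to take the $\ep$-equilibrium furnished by Theorem~\ref{difficultcase} in a suitable subgame and lift it to the whole game by prepending a finite, deviation-proof approach. First I would invoke Theorem~\ref{difficultcase}: since Case~3 of Theorem~\ref{TV-thm} is assumed together with the three conditions on $Y_1,Y_2$, the hypotheses of Theorem~\ref{difficultcase} hold, so there is a history $h\in A^{<\dN}$ such that the subgame $\Gamma^h$ admits an $\ep$-equilibrium $\tau=(\tau_1,\tau_2)$; write $w_i$ for its payoff in $\Gamma^h$. Because $\tau$ is an $\ep$-equilibrium of $\Gamma^h$, each player defends her subgame minmax value, so $w_i\geq v_i(h)-\ep$, and by Part~1 of Lemma~\ref{lemma:independent} we have $v_i(h)=v_i$; hence $w_i\geq v_i-\ep$ for each $i\in\{1,2\}$. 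This lower bound is exactly what makes the profile below incentive compatible.

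Next I would construct a profile $\sigma^*$ in $\Gamma$ with three ingredients. During the first $\ell:=|h|$ stages (Phase~I) the players steer the play towards $h$, reproducing the action profiles recorded in $h$ conditional on no absorption. From stage $\ell+1$ on (Phase~II), provided $h$ has been reached and no deviation detected, the players follow $\tau$, yielding payoff $w_i\geq v_i-\ep$. As punishment, if during Phase~I a player is detected deviating \emph{without} causing absorption, her opponent switches from the next stage to an $\frac{\ep}{2}$-punishment strategy; by the two-player minmax identity recalled in Section~\ref{Sec.Model} and by Part~1 of Lemma~\ref{lemma:independent} such strategies exist and hold the deviator to at most $v_i+\frac{\ep}{2}\leq w_i+\frac{3\ep}{2}$, so a nonabsorbing Phase-I deviation is unprofitable up to $O(\ep)$, while a Phase-II deviation is unprofitable up to $\ep$ because $\tau$ is an $\ep$-equilibrium of $\Gamma^h$.

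The delicate point, and the main obstacle, is an \emph{absorbing} deviation during Phase~I: once absorption occurs the game is over and there is no chance to punish ex post, so such a deviation must be deterred by the prescribed play itself. This is precisely where the hypotheses of the present case enter. I would arrange the Phase-I behaviour so that at every approach stage player $-i$'s prescribed mixed action lies in the safe set $Y_{-i}$ of Section~\ref{sec-case3}; then by the definition of $Y_{-i}$ together with conditions~(2) and~(3) of Theorem~\ref{difficultcase} (equivalently, by condition~(b) of Case~3 of Theorem~\ref{TV-thm} applied to $x^0$), any absorbing action $a_i$ of player~$i$ with positive absorption probability yields $r^*_i(a_i,\cdot)<v_i\leq w_i+\ep$, so the absorbing deviation gains at most $\ep$. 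The one knot to untie is the compatibility of ``reaching the fixed history $h$'' with ``playing a genuinely mixed safe action'': since the safe approach reaches $h$ only with positive probability and condition~(1) of Theorem~\ref{difficultcase} guarantees it is nonabsorbing, I would follow the Big Match computation in Example~\ref{ex bm}, using that $h$ is exactly a history reached with positive probability along the safe play of the Flesch--Solan construction, so that prescribing the approach recorded in $h$ keeps both players inside their safe sets throughout Phase~I. Detection of nonabsorbing deviations within mixed safe play is handled by the same law-of-large-numbers statistical tests as in Proposition~\ref{lemma eq2}. Applying Theorem~\ref{difficultcase} with parameter $\ep/2$ and tightening the punishment accordingly absorbs the constants and delivers a genuine $\ep$-equilibrium of $\Gamma$.
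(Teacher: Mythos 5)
Your first paragraph matches the paper's opening move exactly: invoke Theorem~\ref{difficultcase} to get a history $h$ at some stage $t$ whose subgame has an $\ep$-equilibrium $\tau$, and note via Part~1 of Lemma~\ref{lemma:independent} that its payoff satisfies $w_i\geq v_i-\ep$. But from there the proposal diverges, and the divergence is where it breaks. You try to have the players \emph{actually reproduce} the history $h$ in Phase~I and only then run $\tau$. The paper never reaches $h$ at all: its profile has the players play the nonabsorbing mixed action pair $x^0$ for the first $t-1$ stages, and then, \emph{regardless of the realized history}, play $\tau$ ``as if $h$ had been realized.'' This is legitimate precisely because the payoffs are tail-measurable: the continuation payoffs of $\tau$ are unaffected by substituting the actual (same-length) prefix for $h$. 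That single observation dissolves the ``delicate point'' you spend your last two paragraphs wrestling with: nonabsorbing Phase-I deviations become payoff-irrelevant (no detection, no statistical tests, no punishment needed there), and absorbing Phase-I deviations are unprofitable directly by Condition~3(b) of Theorem~\ref{TV-thm}, since they yield at most $v_i-\ep\leq u_i(\sigma^*)$.

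Your substitute construction has a genuine gap that your own text flags but does not close. First, the action profiles recorded in $h$ are arbitrary: they may have positive (even probability-one) absorption probability, so faithfully replaying $h$ can itself trigger absorption at absorbing payoffs $r_i$ that no hypothesis controls; a history in this model only records that absorption \emph{happened} not to occur, not that it could not. Second, nothing in Case~3 or in Theorem~\ref{difficultcase} bounds $r^*_i(a_i,a_{-i})$ when $a_{-i}$ is a \emph{pure} action from $h$, so absorbing deviations against the replayed history are not deterred; conditions~(2) and~(3) of Theorem~\ref{difficultcase} only control absorbing responses to mixed actions lying in $Y_1$ or $Y_2$. Your proposed fix --- keep the opponent's Phase-I actions inside $Y_{-i}$ --- is incompatible with reaching the fixed history $h$ with probability one, and the appeal to ``the safe play of the Flesch--Solan construction'' invokes properties of $h$ that the statement of Theorem~\ref{difficultcase}, as given in the paper, simply does not provide (it asserts only that \emph{some} history $h$ works). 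So the knot you identify is real, and untying it requires abandoning the goal of reaching $h$ in favor of the tail-measurability trick above.
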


\begin{proof}
By Theorem \ref{difficultcase}, there is a history $h$ such that the subgame that starts at $h$ admits an $\ep$-equilibrium $\sigma$. Assume that $h$ is a history at stage $t$, i.e., $h\in A^{t-1}$.

The following strategy profile $\sigma^*$ is an $\ep$-equilibrium: each player $i\in\{1,2\}$ plays the mixed action $x^0_i$ in stages $1,2,\ldots,t-1$, and in the remaining game, regardless of the realized history at stage $t$, the players play according to $\sigma$ as if the history $h$ was realized.%
\footnote{An alternative $\ep$-equilibrium would be to let the players play from stage $t$ onward according to $\sigma$, as if the history $h$ was realized, and then use backward induction for the stages $1,2,\ldots,t-1$.}

Note that playing from stage $t$ onward as if the history $h$ was realized can be done without affecting the payoffs because the payoff functions are tail-measurable. In particular, each player $i$'s expected payoff under $\sigma^*$ is at least her minmax value minus $\ep$, i.e., $u_i(\sigma^*)\geq v_i-\ep$. Note that neither player can deviate profitably from $\sigma^*$ in the first $t-1$ stages due to Condition~3(b) of Theorem \ref{TV-thm}, and from stage $t$ onward, neither player can gain more than $\ep$ by an individual deviation from $\sigma^*$ due to the choice of $\sigma$.
\end{proof}

\begin{proposition} Assume that Case 3 of Theorem \ref{TV-thm} holds, 
yet
the condition of Theorem \ref{difficultcase} does not hold. Then, there is a player $i\in\{1,2\}$, an action $a_i\in A_i$, and a mixed action $y_{-i}\in Y_{-i}$ such that:
\begin{itemize}
    \item[(a)] $(a_i,y_{-i})$ is absorbing: $p(a_i,y_{-i})>0$.
    \item[(b)] Each player's expected absorbing payoff under $(a_i,y_{-i})$ is at least her minmax value: $r^*_j(a_i,y_{-i})\geq v_j$ for each $j\in\{1,2\}$.
    \item[(c)] Player $i$ has no profitable absorbing deviation: for every $a'_i\in A_i$ such that $p(a'_i,y_{-i})>0$ we have $r^*_i(a_i,y_{-i})\geq r^*_i(a'_i,y_{-i})$.
\end{itemize}
Consequently, the absorbing Blackwell game $\Gamma$ admits an $\ep$-equilibrium.
\end{proposition}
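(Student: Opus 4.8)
The plan is to establish the two assertions of the proposition in turn. First I would produce the player $i$, the action $a_i$, and the mixed action $y_{-i}\in Y_{-i}$ witnessing (a)--(c) by a case analysis on \emph{which} of the three hypotheses of Theorem~\ref{difficultcase} is violated (at least one is, since its condition fails). Then I would convert the resulting ``good'' absorbing profile $(a_i,y_{-i})$ into an $\ep$-equilibrium by the block-and-punish scheme of Case~1 (Proposition~\ref{lemma eq1}), monitoring the mixing player with a statistical test as in Case~2 (Proposition~\ref{lemma eq2}).

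For the first part, suppose clause~3 of Theorem~\ref{difficultcase} fails: there are $y_2\in Y_2$ and an action of player~1 that absorbs against $y_2$ with conditional payoff at least $v_1$. I would then set $i=1$, $y_{-i}=y_2$, and take $a_i$ to be a \emph{best} absorbing response of player~1 to $y_2$, i.e.\ $a_i\in\argmax_{a_1'\colon p(a_1',y_2)>0}r^*_1(a_1',y_2)$, so that $r^*_1(a_i,y_2)\geq v_1$. Failure of clause~2 is symmetric with the players interchanged. If clause~1 fails, there are $y_1\in Y_1,\,y_2\in Y_2$ with $p(y_1,y_2)>0$; the key observation is that $r^*_1(y_1,y_2)$ is a convex combination of the numbers $r^*_1(y_1,a_2)$ over $a_2\in\supp(y_2)$ with $p(y_1,a_2)>0$, each at least $v_1$ because $y_1\in Y_1$, so $r^*_1(y_1,y_2)\geq v_1$; and $r^*_1(y_1,y_2)$ is also a convex combination of $r^*_1(a_1,y_2)$ over $a_1\in\supp(y_1)$, so the best response $a_i\in\argmax_{a_1}r^*_1(a_1,y_2)$ satisfies $r^*_1(a_i,y_2)\geq r^*_1(y_1,y_2)\geq v_1$; again set $i=1,\ y_{-i}=y_2$. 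In each case (a) is immediate, (c) holds since $a_i$ is chosen as a maximizer, and (b) holds because $r^*_i(a_i,y_{-i})\geq v_i$ by construction while $r^*_{-i}(a_i,y_{-i})\geq v_{-i}$ follows directly from $y_{-i}\in Y_{-i}$ applied to the absorbing action $a_i$.

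For the second part I would fix $\eta>0$ with $\eta\cdot 2M\leq\ep$ and $N$ so large that playing $(a_i,y_{-i})$ for $N$ stages absorbs with probability at least $1-\eta$, and propose $\sigma^*$: the players play $(a_i,y_{-i})$ for the first $N$ stages; if no absorption has occurred by stage $N$ both switch to $\tfrac{\ep}{2}$-punishment; and during the first $N$ stages player~$i$ runs a statistical test on player~$-i$'s realized action frequencies, switching to punishment of player~$-i$ whenever the empirical frequency drifts away from $y_{-i}$ or an action outside $\supp(y_{-i})$ appears. On the path each player's payoff is at least $(1-\eta)\,r^*_j(a_i,y_{-i})-\eta\cdot 2M\geq v_j-\ep$. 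Player~$i$'s incentives are handled exactly as in Proposition~\ref{lemma eq1}: an absorbing deviation cannot help by (c), and a nonabsorbing deviation only postpones absorption and then triggers punishment at her minmax level.

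The main obstacle is the incentive of the \emph{mixing} player~$-i$, who is not covered by~(c). Frequency deviations and out-of-support actions are caught by the test and punished down to $v_{-i}$, which is a genuine floor because $y_{-i}\in Y_{-i}$; the delicate case is a one-shot absorbing deviation \emph{within} $\supp(y_{-i})$, whose expected gain is $p(a_i,a_{-i})\bigl(r_{-i}(a_i,a_{-i})-r^*_{-i}(a_i,y_{-i})\bigr)$ and which the test cannot catch before the game ends. I would handle this in the spirit of Vrieze and Thuijsman (1989), by selecting $y_{-i}$ inside $Y_{-i}$ so that player~$-i$ is essentially indifferent among her absorbing responses to $a_i$ (equivalently, no $a_{-i}$ with $p(a_i,a_{-i})>0$ gives $r_{-i}(a_i,a_{-i})$ more than $\ep$ above $r^*_{-i}(a_i,y_{-i})$), and then absorbing any residual gain into the $\ep$-slack via a small test tolerance and large $N$. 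Checking that such a $y_{-i}$ can indeed be extracted in each of the three cases, while remaining a best-response target for player~$i$, is the step on which I expect the argument to hinge and where I would concentrate the technical care.
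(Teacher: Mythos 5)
Your Part~1 is correct and essentially identical to the paper's argument: the same case analysis over which clause of Theorem~\ref{difficultcase} fails, the same two-way averaging of $r^*_1(y_1,y_2)$ when clause~1 fails, and the same maximizing choice of $a_i$ to obtain~(c).

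The gap is in Part~2, and it is exactly the point you flag but do not resolve. Because player~$i$ plays the \emph{pure} action $a_i$, absorption occurs at rate $p(a_i,y_{-i})$ per stage---possibly at stage~1 with probability one---so no statistical test can act before an absorbing deviation of player~$-i$ pays off. Your proposed repair, choosing $y_{-i}\in Y_{-i}$ so that no absorbing response $a_{-i}$ yields player~$-i$ more than $r^*_{-i}(a_i,y_{-i})+\ep$, is impossible in general. Take Example~\ref{ex bm} with nonabsorbing payoff $g\equiv(\tfrac12,\tfrac12)$. Then $v_1=v_2=\tfrac12$, $Y_1=\{C\}$, and $Y_2=\{y_2\colon y_2(R)\geq\tfrac12\}$; only clause~3 of Theorem~\ref{difficultcase} fails, and the unique witness produced by Part~1 is $i=1$, $a_1=Q$, $y_2=\tfrac12 L+\tfrac12 R$. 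Player~2's two absorbing responses to $Q$ give her $0$ and $1$, so no element of $Y_2$ makes her even approximately indifferent among them; and under your profile $(Q,y_2)$ she simply deviates to $R$ at stage~1, the game absorbs immediately, and she gains $\tfrac12$ with no opportunity for detection or punishment.

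What is missing is the one ingredient of the hypothesis that your Part~2 never invokes: Case~3 of Theorem~\ref{TV-thm}, which supplies the nonabsorbing limit profile $x^0$. The paper's equilibrium has player~$i$ play the diluted mixture $(1-\delta)x^0_i+\delta a_i$ against $y_{-i}$. The key step is that $(x^0_i,y_{-i})$ is nonabsorbing: if $p(x^0_i,y_{-i})>0$, then averaging $r^*_{-i}(x^0_i,y_{-i})$ over player~$i$'s actions gives at least $v_{-i}$ (since $y_{-i}\in Y_{-i}$), while averaging the same quantity over player~$-i$'s actions gives at most $v_{-i}-\ep$ (by condition~3(b)), a contradiction. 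Hence on-path absorption occurs only through $a_i$, at rate $O(\delta)$ per stage, and this restores the separation of timescales that your statistical test needs: a deviation of player~$-i$ inside $\supp(y_{-i})$ can trigger absorption only with probability $O(\delta)$ per stage, so it is detected and punished long before it matters, while an absorbing deviation outside $\supp(y_{-i})$ absorbs almost entirely through the $x^0_i$ component, which condition~3(b) caps at $v_{-i}-\ep$. In the example above this produces the familiar Big Match equilibrium---player~1 quits with probability $\delta$ per stage while player~2's frequency of $R$ is monitored---rather than a one-shot quit. So the fix is not a cleverer selection of $y_{-i}$, which need not exist, but the dilution of $a_i$ by $x^0_i$; as written, your construction fails and your fallback cannot be carried out.
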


\begin{proof}\\
\noindent\textbf{Part 1.} First we show that there is a player $i\in\{1,2\}$, an action $a_i\in A_i$, and a mixed action $y_{-i}\in Y_{-i}$ with properties (a), (b) and (c). 

Note that it is sufficient to find a player $i\in\{1,2\}$, an action $a_i\in A_i$, and a mixed action $y_{-i}\in Y_{-i}$ with properties (a) and (b), without requiring (c). Indeed, suppose that we have some $a_i$ and $y_{-i}$ with properties (a) and (b). Let $\widehat a_i\in A_i$ be such that (i) $(\widehat  a_i,y_{-i})$ satisfies  properties (a) and (b), and (ii) $r^*_i(\widehat a_i,y_{-i})\geq r^*_i(a_i,y_{-i})$ for every $a_i\in A_i$ such that $(a_i,y_{-i})$ satisfies properties (a) and (b). By the definition of $Y_{-i}$, we also have $r^*_{-i}(\widehat a_i,y_{-i})\geq v_{-i}$. Hence, $\widehat a_i$ and $y_{-i}$ satisfy all properties (a), (b) and (c).

Assume first that condition 1 of Theorem \ref{difficultcase} does not hold. Then, $p(y_1,y_2)>0$ for some $(y_1,y_2)\in Y_1\times Y_2$. By the definition of $Y_1$, we have $r^*_1(y_1,y_2)\geq v_1$. This implies that there is an action $a_1\in A_1$ such that $p(a_1,y_2)>0$ and $r^*_1(a_1,y_2)\geq v_1$. By the definition of $Y_2$, we also have $r^*_2(a_1,y_2)\geq v_2$. 
Hence, $a_1$ and $y_2$ satisfy properties (a) and (b).

Assume now that condition 3 of Theorem \ref{difficultcase} does not hold; the proof is similar if condition 2 does not hold. Then, there are $y_2\in Y_2$ and $a_1\in A_1$ such that $p(a_1,y_2)>0$ and $r^*_1(a_1,y_2)\geq v_1$. By the definition of $Y_2$, we also have $r^*_2(a_1,y_2)\geq v_2$. 
Therefore, $a_1$ and $y_2$ satisfy properties (a) and (b).\medskip

\noindent\textbf{Part 2.} We show that $\Gamma$ admits an $\ep$-equilibrium. 

By Part 1, there is a player $i\in\{1,2\}$, an action $a_i\in A_i$, and a mixed action $y_{-i}\in Y_{-i}$ with properties (a), (b) and (c). Note that $(x^0_i, y_{-i})$ is nonabsorbing due to condition 3(b) of Theorem \ref{TV-thm}.

The game $\Gamma$ admits an $\ep$-equilibrium $\sigma^*$ of the following form:%
\footnote{The structure of the $\ep$-equilibrium is similar to the $\ep$-equilibrium in the proof of Proposition~\ref{lemma eq2}.}
for sufficiently small $\delta>0$, the players play the mixed action profile $\widehat{x}=\left((1-\delta)x^0_i+\delta a_i, y_{-i}\right)$ for the first $N$ stages, where
$N$ is large so that this 
play
leads to absorption with probability close to~1. 
The players switch to $\frac{\ep}{2}$-punishment strategies if one of the following events occurs: (i) no absorption takes place by stage $N$, or (ii) during the play, a statistical test (based on the law of large numbers) detects a deviation by player $-i$: the frequency with which player $-i$ chooses her actions is not sufficiently close to $x^0_{-i}$, including the event that player $-i$ chooses an action that has probability zero under $x^0_{-i}$. 
Provided $\delta$ is sufficiently small, the probability of false detection of deviation is small.

As in the proof of Proposition~\ref{lemma eq2},
$\sigma^*$ is an $\ep$-equilibrium,
provided $\delta$ is sufficiently small and $N$ is sufficiently large.
%
%There is no individual deviation that would increase the payoff of the deviator by more than $\ep$. Indeed, (1) player $i$ has no profitable absorbing deviation due to property (c); (2) any deviation by player $-i$ within the support of $x^0_{-i}$ leads to absorption with probability at most $\delta$ in a single stage, and is detected statistically if it happens more often, (3) any deviation by player $-i$ outside the support of $y_{-i}$ is followed by a punishment, (4) nonabsorbing deviations are encountered by punishment after stage~$N$.
\end{proof}

\section{Open Problems}\label{Sect.Discuss} 
%\section{Discussion}\label{Sect.Discuss} 

We here present a few open problems that arise from our study.
We proved that every two-player absorbing Blackwell game with tail-measurable payoffs admits an $\ep$-equilibrium, for every $\ep > 0$.
It is natural to ask whether this result can be extended to 
(i) games where monitoring is not perfect
(see, e.g., Shmaya (2011) and Arieli and Levy (2015)),
(ii) games with more than two players, 
(iii) games where the nonabsorbing payoff functions are not necessarily tail-measurable, 
(iv) games where the absorbing payoff depends on the calendarial time or on the history,
and
(v) games with several nonabsorbing states.
We hope that the approach we have taken in this paper will prove useful in establishing these extensions.

%Shmaya (2011)
%extended Martin's (1998) result to two-player win-lose Blackwell games where monitoring is deterministic and eventually perfect:
%each player learns the action played by the other player in some finite delay, which may
%be
%dependent on the history. 
%Arieli and Levy (2015) extended this result to the situation where the delay may be stochastic.
%Our result extends to this situation,
%because the statistical tests can be successfully employed as soon as monitoring is 
%eventually perfect.
%The other three extensions are open,
%and we hope that our approach will be able to help in solving them. 
%With regards to games with more than two players,
%the minmax value of a player can be strictly higher than her maxmin value, and hence our approach in the proof to use subgame-perfect $\ep^2$-maxmin strategies is no longer sufficient.

%\red
%The difficulty in studying stochastic games with several nonabsorbing states is that
%when the game has, e.g.,
%\black
%two nonabsorbing states,
%it might happen that the limit of the discounted equilibrium payoff in the two nonabsorbing states differ,
%even though as $\lambda$ goes to 0, the expected number of times in which the play moves between these two states before absorption occurs goes to infinity.
%This property makes the construction in our proof fail.
%We note that recursive games (see, e.g., Everett, 1957) are stochastic games with tail-measurable payoffs,
%and the existence of $\ep$-equilibrium in this class of games when the set of states is countable is still open.
%Hence the determination of this extension seems difficult.

\section*{References}

\begin{enumerate}

\item Arieli I. and Levy Y.J. (2015) Determinacy of Games with
Stochastic Eventual Perfect Monitoring, \emph{Games and Economic
Behavior}, \textbf{91}, 166--185.

\item Ashkenazi-Golan G., Flesch J., Predtetchinski A., and Solan
E. (2022) Existence of Equilibria in Repeated Games with Long-run Payoffs. \emph{Proceedings of the National Academy of Sciences} 119, 11: e2105867119.

\item Blackwell D. and Ferguson T.S. (1968) The Big Match, \emph{Annals of Mathematical Statistics}, 33, 159-163.

\item Blackwell D. (1969) Infinite $G_\delta$-Games with Imperfect
Information, \emph{Applicationes Mathematicae}, \textbf{10},
99--101.

\item Blackwell D. and Diaconis P. (1996) A Non-measurable Tail
Set. \emph{Statistics, Probability and Game Theory IMS Lecture
Notes} - Monograph Series Volume 30.

\item Bruy\`{e}re V. (2021) Synthesis of Equilibria in Infinite-Duration: Games on Graphs. In: \emph{ACM SIGLOG News} 8.2, pp. 4–29.

\item Chatterjee K. (2005) Two-Player Nonzero-Sum $\omega$-Regular
Games. In International Conference on Concurrency Theory.
Springer, Berlin, Heidelberg, 413--427.

\item Chatterjee K. (2006) Nash Equilibrium for Upward-Closed
Objectives. In International Workshop on Computer Science Logic).
Springer, Berlin, Heidelberg, 271--286.

\item Chatterjee K. (2007) Concurrent Games with Tail Objectives,
\emph{Theoretical Computer Science}, \textbf{388},
181--198.

\item Chatterjee K. and Henzinger T.A. (2012) A Survey of
Stochastic $\omega$-Regular Games, \emph{Journal of Computer and
System Sciences}, \textbf{78}, 394--413.

\item Everett H. (1957)
Recursive games. In \emph{Contributions to the Theory of Games}, Eds.: Dresher M, Tucker A. W. and Wolfe P., Vol. III, Annals of Mathematics Studies 39, Princeton University Press, Princeton, pp. 47-78.

\item Fink A.M. (1964) Equilibrium in a Stochastic $n$-Person
game. \emph{Journal of Science Hiroshima University}
\textbf{28}(1):89-93.

\item Flesch J., Herings P. J-J., Maes J., Predtetchinski A. (2022) Individual Upper Semicontinuity and Subgame Perfect $\ep$-Equilibria in Games with Almost Perfect Information. \emph{Economic Theory}, \textbf{73}, 695--719.

%\item Flesch J., Herings P. J.-J., Maes J., Predtetchinski A.
%(2021) Subgame Maxmin Strategies in Zero-Sum Stochastic Games with
%Tolerance Levels, 
%\emph{Dynamic Games and Applications}, 11(4), 704--737.

\item Flesch J. and Solan E. (2022): Equilibrium in Two-player Stochastic Games with Shift-invariant Payoffs. ArXiv:2203.14492.

\item Flesch J., Schoenmakers G. and Vrieze K. (2008)
Stochastic Games on a Product State Space,
\emph{Mathematics of Operations Research} \textbf{33}, 403-420.

%\item Flesch J., Thuijsman F. and Vrieze K. (1996)
%Recursive Repeated Games with Absorbing States,
%\emph{Mathematics of Operations Research} \textbf{21}, 1016-1022.

\item Flesch J., Thuijsman F. and Vrieze K. (2007)
Stochastic Games with Additive Transitions,
\emph{European Journal of Operational Research} \textbf{179}, 483-497.

\item Gillette D. (1957) Stochastic Games with Zero Stop Probabilities. In \emph{Contributions to the Theory of Games}, Eds.: Dresher M., Tucker A.W., and Wolfe P, Vol. III, Annals of Mathematics Studies 39, Princeton University Press, Princeton, NJ, pp. 179-187.

\item Gr\"adel E. and Ummels M. (2008) Solution Concepts and
Algorithms for Infinite Multiplayer Games, In \emph{New
Perspectives on Games and Interaction}, \textbf{4},
151--178.

\item
Heller Y. (2012)
Sequential Correlated Equilibria in Stopping Games. \emph{Operations Research}, \textbf{60}(1), 209--224.

\item Ja\`{s}kiewicz A. and Nowak A.S. (2016)
Non-zero-sum Stochastic Games, in \emph{Handbook of Dynamic Game Theory}, Eds.: Basar T. and Zaccour G., Springer, Berlin.

\item Le Roux S. and Pauly A. (2014) Infinite Sequential Games
with Real-Valued Payoffs. In Proceedings of the joint meeting of
the twenty-third EACSL annual conference on computer science logic
(CSL) and the twenty-ninth annual ACM/IEEE symposium on logic in
computer science (LICS), 1-10.

%\item Maitra A., Purves R., and Sudderth W. (1992) Approximation
%Theorems for Gambling Problems and Stochastic Games, Lecture Notes
%in Economics and Mathematical Systems \textbf{389},
%114--132.

\item Maitra A. and Sudderth W. (1993) Borel Stochastic Games with Limsup Payoff. \emph{The Annals of Probability}, 861--885.

\item Maitra A. and Sudderth W. (1998) Finitely Additive
Stochastic Games with Borel Measurable Payoffs,
\emph{International Journal of Game Theory}, \textbf{27},
257--267.

\item Maitra A. and Sudderth W. (2003) Borel Stay-in-a-Set Games,
\emph{International Journal of Game Theory}, \textbf{32},
97--108.

%\item Martin D.A. (1975) Borel Determinacy, \emph{Annals of
%Mathematics}, \textbf{102}, 363--371.

\item Martin D.A. (1998) The Determinacy of Blackwell Games,
\emph{Journal of Symbolic Logic}, \textbf{63}, 1565--1581.

%\item Mashiah-Yaakovi A. (2015) Correlated Equilibria in
%Stochastic Games with Borel Measurable Payoffs, \emph{Dynamic
%Games and Applications}, \textbf{5}, 120-135.

%\item Mertens J.F. (1987) Repeated games, \emph{Proceedings of the
%International Congress of Mathematicians}, 1986 Providence, RI.
%American Mathematical Society.

%\item Mertens J.-F. and Neyman A. (1981) Stochastic games.
%\emph{Internation Journal of Game Theory} \textbf{10}, 53-66.

\item Orkin M. (1972) Infinite Games with Imperfect Information,
\emph{Transactions of the American Mathematical Society},
\textbf{171}, 501--507.

\item Rosenthal S. (1975) Nonmeasurable Invariant Sets. \emph{American Mathematical
Monthly} \textbf{82}, 484--491.

\item Shmaya E. (2011) The Determinacy of Infinite Games with
Eventual Perfect Monitoring, \emph{Proceedings of the American
Mathematical Society}, \textbf{139}, 3665--3678.

%\item Shmaya E., Solan E., and Vieille N. (2003) An Application of
%Ramsey Theorem to Stopping Games, \emph{Games and Economic
%Behavior}, \textbf{42}, 300--306.

\item  Shmaya E. and Solan E.  (2004)
Two Player Non Zero-Sum Stopping Games in Discrete Time.
\emph{The Annals of Probability}, \textbf{32}, 2733--2764.

\item Simon R.S. (2007) The Structure of Non-Zero-Sum Stochastic
Games. \emph{Advances in Applied Mathematics}, \textbf{38},
1--26.

\item Simon R.S. (2012) A Topological Approach to Quitting Games,
\emph{Mathematics of Operations Research}, \textbf{37},
180--195.

\item Simon R.S. (2016) The Challenge of Non-Zero-Sum Stochastic
Games, \emph{International Journal of Game Theory}, \textbf{45},
191--204.

\item {Solan E. (1999) Three-Player Absorbing Games.
\emph{Mathematics of Operations Research}, \textbf{24},
669--698.}

\item 
Solan E. (2000)
Stochastic Games with Two Non-Absorbing States.
\emph{Israel Journal of Mathematics}, \textbf{119}, 29--54.

\item
Solan E. (2018)
The Modified Stochastic Game.
\emph{International Journal of Game Theory}, \textbf{47}(4), 1287--1327.

\item Solan E. and Solan O.N. (2020) Quitting Games and Linear
Complementarity Problems, \emph{Mathematics of Operations
Research}, \textbf{45}, 434--454.

\item
Solan E. and Solan O.N. (2021) 
Sunspot Equilibrium in Positive Recursive General Quitting Games.
\emph{International Journal of Game Theory}, \textbf{50}, 891--909.

\item Solan E., Solan O.N., and Solan R. (2020) Jointly Controlled
Lotteries with Biased Coins, \emph{Games and Economic Behavior},
\textbf{119}, 383--391.

\item Solan E. and Vieille N. (2001) Quitting Games.
\emph{Mathematics of Operations Research}, \textbf{26},
265--285

\item  Solan E. and Vohra R. (2001)
Correlated Equilibrium in Quitting Games.
\emph{Mathematics of Operations Research}, \textbf{26}, 601--610.

\item  Solan E. and Vohra R. (2002)
Correlated Equilibrium Payoffs and Public Signalling in Absorbing Games.
\emph{International Journal of Game Theory}, \textbf{31}, 91--121.

\item  Sorin S. (1992) Repeated Games with Complete Information. In Handbook of Game Theory with Economic Applications. Eds. Aumann R., Hart S. (Elsevier), pp. 71–107. 

\item Takahashi M. (1964) Equilibrium Points of Stochastic
Non-cooperative $n$-person Games. \emph{Journal of Science
Hiroshima University} \textbf{28}, 95-99.

\item Thuijsman F. (1992): \emph{Optimality and Equilibria in
Stochastic Games}. CWI-Tract 82, CWI, Amsterdam.

\item Vrieze O.J. and Thuijsman F. (1989) On Equilibria in
Repeated Games with Absorbing States, \emph{International Journal of
Game Theory} \textbf{18}, 293-310.

\item Vervoort M.R. (1996) Blackwell Games, Lecture Notes -
Monograph Series, 369-390.

\item Vieille N. (2000a) Two-player Stochastic Games I: A Reduction,
\emph{Israel Journal of Mathematics}, \textbf{119}, 55--91.

\item Vieille N. (2000b) 
Two-player Stochastic Games II: The Case of Recursive Games,
\emph{Israel Journal of Mathematics}, \textbf{119}, 93--126.
\end{enumerate}
\end{document}